\newtheorem{Thm}{Theorem}[section]
\newtheorem{Cor}[Thm]{Corollary}
\newtheorem{Lem}[Thm]{Lemma}
\newtheorem{Prop}[Thm]{Proposition}
\newtheorem{Property}[Thm]{Property}
\newtheorem{Def}[Thm]{Definition}
\newtheorem{Rem}[Thm]{Remark}
\newtheorem{Ex}[Thm]{Example}
\newcommand{\ie}  {\textit{i.e.,\ }}
\newcommand{\bx} {\mathbf{x}}
\newcommand{\bT} {\mathbb{T}}
\newcommand{\bS} {\mathbb{S}}
\newcommand{\bR} {\mathbb{R}}
\newcommand{\bZ} {\mathbb{Z}}
\newcommand{\naturals}  {\mathbb{N}}
\newcommand{\from} {\leftarrow}
\def\charge{ {\rm {charge}}}
\title{Combinatorial expansions in $K$-theoretic bases}
\author{Jason Bandlow}
\address{Department of Mathematics, University of Pennsylvania,
Philadelphia, PA 19104}
\email{jbandlow@math.upenn.edu}
\author{Jennifer Morse}
\thanks{Research supported in part by NSF grants
DMS:1001898,0652641,0638625}
\address{Department of Mathematics, Drexel University,
Philadelphia, PA 19104}
\email{morsej@math.drexel.edu}
\begin{document}

\begin{abstract}
We study the class $\mathcal C$ of symmetric functions whose
coefficients in the Schur basis can be described by generating
functions for sets of tableaux with fixed shape.  Included in this
class are the Hall-Littlewood polynomials, $k$-atoms, and Stanley
symmetric functions; functions whose Schur coefficients encode
combinatorial, representation theoretic and geometric information.
While Schur functions represent the cohomology of the Grassmannian
variety of $GL_n$, Grothendieck functions $\{G_\lambda\}$ represent
the $K$-theory of the same space.  In this paper, we give a
combinatorial description of the coefficients when any element of
$\mathcal C$ is expanded in the $G$-basis or the basis dual to
$\{G_\lambda\}$.
\end{abstract}

\maketitle

\section{Introduction} \label{sec:intro}

Schubert calculus uses intersection theory to convert enumerative
problems in projective geometry into computations in cohomology rings.
In turn, the representation of Schubert classes by Schur polynomials
enables such computations to be carried out explicitly.  The
combinatorial theory of Schur functions is central in the application
of Schubert calculus to problems in geometry, representation theory,
and physics.

In a similar spirit, a family of power series called Grothendieck
polynomials were introduced by Lascoux and Sch\"utzenberger in
\cite{LS82} to explicitly access the $K$-theory of $GL_n/B$.  In
\cite{FK94}, Fomin and Kirillov first studied the stable limit of
Grothendieck polynomials as $n \to \infty$.  When indexed by
Grassmannian elements, we call these limits the $G$-functions.
Grothendieck polynomials  and $G$-functions are connected to
representation theory and geometry in a way that leads to a
generalization of Schubert calculus where combinatorics is again at
the forefront. Moreover, fundamental aspects of the theory of
Schur functions are contained in the theory of $G$-functions since
each  $G_\lambda$ is an inhomogeneous symmetric polynomial whose
lowest homogeneous component is the Schur function $s_\lambda$.  

Parallel to the study of $G$-functions is the study of a second family
of functions that arise by duality with respect to the Hall inner
product on the ring $\Lambda$ of symmetric functions.  In particular,
results in \cite{Lenart,B02} imply that the $G$-functions form a
Schauder basis for the completion of $\Lambda$ with respect to the
filtration by the ideals $\Lambda^{r} = \bigoplus_{|\lambda|\geq r}
\mathbb Z s_\lambda$.  The dual Hopf algebra to this completion is
isomorphic to $\Lambda$.  Therein lies the basis of $g$-functions,
defined by their duality to the $G$-basis.  Lam and Pylyavskyy first
studied these functions directly in \cite{LP07} where they were called
dual stable Grothendieck polynomials.  By duality, each  $g_\lambda$
is inhomogeneous with highest homogeneous component equal to
$s_\lambda$.  

Strictly speaking, the $G$- and $g$-functions do not lie in
the same space and there is no sensible way to write $G$-functions in
terms of $g$-functions.  However, any element of $\Lambda$ can be 
expanded into both the $G$- and the $g$-functions and it is such
expansions that are of interest here.
Motivated by the many families of symmetric functions whose
transition matrices with Schur functions have combinatorial
descriptions and encode representation theoretic or geometric
information, our focus is on functions with what we refer to as
\emph{tableaux Schur expansions}. A symmetric function, $f_\alpha$, is
said to have a tableaux Schur expansion if there is a set of tableaux
$\bT(\alpha)$ and a weight function $wt_\alpha$ so that 
\begin{equation}
\label{Eq:falpha}
f_\alpha =
\sum_{T \in \bT(\alpha)}^{} wt_\alpha(T) s_{sh(T)}\;.
\end{equation}
Among the classical examples is the family of Hall-Littlewood polynomials 
\cite{Hall,LHall}, whose tableaux Schur expansion gives the 
decomposition of a graded character of $SL_n(\mathbb C)$ into 
its irreducible components \cite{GarPro}.   A more recent 
example is given by the $k$-atoms \cite{[LLM]}.  These  are
conjectured to represent Schubert classes for the homology 
of the affine Grassmannian when $t=1$ and their very
definition is a tableaux Schur expansion.

In this paper, we give combinatorial descriptions for the $G$- and 
the $g$-expansion of every function with a tableaux Schur expansion.
Our formulas are in terms of set-valued tableaux and
reverse plane partitions; $G$-functions are the weight generating 
functions of the former and $g$-functions are the weight generating 
functions of the latter.
More precisely, for any given set $\bT(\alpha)$ of semistandard tableaux,
we describe associated sets $\bS(\alpha)$ and $\bR(\alpha)$ 
of set-valued tableaux and reverse plane partitions, respectively. 
Given also any function $wt_\alpha$ on $\bT(\alpha)$, 
we define an extension of $wt_\alpha$ to $\bS(\alpha)$ and $\bR(\alpha)$.  
In these terms, we prove that any function $f_\alpha$ satisfying
\eqref{Eq:falpha} can be expanded as
\begin{align}
f_\alpha=
   \sum_{S \in \bS(\alpha)}^{} wt_\alpha(S) (-1)^{\varepsilon(S)} g_{sh(S)} 
   = \sum_{R \in \bR(\alpha)}^{} wt_\alpha(R) G_{sh(R)}\,.
\label{eqmain}
\end{align}
The construction of sets $\bS(\alpha)$ and $\bR(\alpha)$ is
described in section~\ref{sec:main}  and the proof of \eqref{eqmain}
is given in section~\ref{sec:involution}.

Since a Schur function has a trivial tableaux Schur expansion, the
simplest application of \eqref{eqmain} describes the transition matrices
between $G/g$ and Schur functions in terms of certain reverse plane 
partitions and set-valued tableaux.  These transition matrices were 
alternatively described by Lenart in \cite{Lenart} using certain skew 
semistandard tableaux.   Although our description is not obviously 
equinumerous, we give a bijective proof of the equivalence in 
section~\ref{sec:altproof}.
As a by-product, we show that Lenart's theorem follows from
\eqref{eqmain}.

In section~\ref{sec:applications}, we show how the description 
of a $G/g$-expansion given by \eqref{eqmain} may lead to a 
more direct combinatorial interpretation for the expansion 
coefficients.  For example, we show that the
Hall-Littlewood functions can be defined by extending the notion of
charge to reverse plane partitions and set-valued tableaux.  We also
show that the $G$/$g$-expansions of a product of Schur functions can
be described by certain Yamanouchi reverse plane partitions and
set-valued tableaux.  Note, this is not the $G$-expansion of 
a product of $G$-functions which was settled in \cite{B02}.   

We use \eqref{eqmain} to get the $G$ and $g$-expansions
for $k$-atoms and Stanley symmetric functions in section
\ref{sec:otherapplications} and leave as open problems their further
simplification.  We finish with a curious identity which has a simple
proof using the methods described here.

\section{Definitions and notation} \label{sec:definitions}
\subsection{Symmetric function basics}
We begin by setting our notation and giving standard definitions
(see eg. \cite{MacSF,EC2,FB} for complete details on symmetric functions).
\begin{Def} \label{Def:YoungDiagram}
  The \emph{Ferrers diagram} of a partition $\lambda = (\lambda_1,
  \lambda_2, \cdots, \lambda_k)$ is a left- and bottom-justified array
  of $1 \times 1$ square cells in the first quadrant of the coordinate
  plane, with $\lambda_i$ cells in the $i^{th}$ row from the bottom.
\end{Def}

\begin{Ex} \label{Ex:YD}
  The Ferrers diagram of the partition $(3,2)$ is $\tiny\yng(2,3)\;$.
\end{Ex}

Given any partition $\lambda$, the \emph{conjugate} $\lambda'$
is the partition obtained by reflecting the diagram of
$\lambda$ about the line $y=x$.  For example,
the conjugate of $(3,2)$  is $(2,2,1)$.

\begin{Def} \label{Def:SST}
  A \emph{semistandard tableau} of shape $\lambda$ is a filling of
  the cells in the Ferrers diagram of $\lambda$ with positive integers,
  such that the entries
  \begin{itemize}
    \item are weakly increasing while moving rightward across any row,
      and 
    \item are strictly increasing while moving up any column.
  \end{itemize}
\end{Def}
\begin{Ex} \label{Ex:SST}
  A semistandard tableau of shape $(3,2)$ is $\tiny\young(23,112)\;$.
\end{Ex}
Throughout this paper, the letter $T$ will generally refer to a
tableau, and $\bT$ will typically denote a set of tableaux.

The \emph{evaluation} of a semistandard tableau is the sequence
$(\alpha_i)_{i \in \naturals}$ where $\alpha_i$ is the number of cells
containing $i$.  The evaluation of the tableau in Example~\ref{Ex:SST} is
$(2,2,1)$ (it is customary to omit trailing $0$'s).  We use
$SST(\lambda)$ to denote the set of all semistandard tableaux of
shape $\lambda$, and $SST(\lambda,\mu)$ to denote the set of all
semistandard tableaux of shape $\lambda$ and evaluation $\mu$.

\begin{Def} \label{Def:ReadingWord}
  A \emph{word} is a finite sequence of positive integers.  The
  \emph{reading word} of a tableau $T$, which we denote by $w(T)$, is
  the sequence $(w_1, w_2, \dots, w_n)$ obtained by listing the
  elements of $T$ starting from the top-left corner, reading
  across each row, and then continuing down the rows.
\end{Def}
\begin{Ex} \label{Ex:RW}
  We have $w\left(\tiny\young(23,112)\right) = (2,3,1,1,2)$.
\end{Ex}

We use the fundamental operations jeu-de-taquin
\cite{S77} and $RSK$-insertion \cite{R38,S61,K70} on words.
The reader can find complete details and definitions of 
these operations in \cite{LSmonoid,EC2,Ful}.
A key property of $RSK$-insertion is that
\[ RSK(w(T)) = T\,, \]
for any tableau $T$. 
When two words insert to the same tableau under the
RSK map, they are said to be \emph{Knuth equivalent}.

The weight generating function of semistandard tableaux can be used
as the definition of \emph{Schur functions}.  For any tableau $T$, let
$\bx^{ev(T)} = x_1^{\alpha_1} x_2^{\alpha_2} \cdots$, where
$(\alpha_1, \alpha_2, \cdots)$ is the evaluation of $T$.  
\begin{Def} \label{Def:Schur}
  The \emph{Schur function} $s_\lambda$ is defined by 
  \[ s_\lambda = \sum_{T \in SST(\lambda)}^{} \bx^{ev(T)} \;.\]
\end{Def}
The Schur functions are elements of $\bZ[ [x_1, x_2, \cdots] ]$, the
power series ring in infinitely many variables, and are well
known to be a basis for the \emph{symmetric functions} (\ie those
elements of $\bZ[ [x_1, x_2, \cdots] ]$ which are invariant
under any permutation of their indices).
\begin{Ex} \label{Ex:S21}
  The Schur function $s_{(2,1)}$ is
  \[ s_{(2,1)} =
  x_1^{2}x_2 + x_1 x_2^2 + 2 x_1 x_2 x_3 + \cdots \] 
  corresponding to the tableaux
  \[\tiny \young(2,11) \quad \young(2,12) \quad \young(3,12) \quad
  \young(2,13) \cdots \;. \]
\end{Ex}

Another basis for the symmetric functions is given by 
the \emph{monomial symmetric functions}.
\begin{Def} \label{Def:Monomial}
  The \emph{monomial symmetric function} $m_\lambda$ is defined by 
  \[ m_\lambda = \sum_{\alpha}^{} \bx^{\alpha} \; ,\] 
  summing over all distinct sequences $\alpha$ which are a
  rearrangement of the parts of $\lambda$. (Here $\lambda$ is thought
  of as having finitely many non-zero parts, followed by infinitely
  many $0$ parts.)
\end{Def}
\begin{Ex} \label{Ex:M21}
  The monomial symmetric function $m_{(2,1)}$ is 
  \[ m_{(2,1)} = x_1^{2}x_2 + x_1 x_2^2 + x_1^2 x_3 +x_1 x_3^2 + x_2^2
     x_3 + x_2 x_3^2 + \dots .  \]
\end{Ex}

The \emph{Kostka numbers} give the change of basis matrix between the
Schur and monomial symmetric functions.  For two partitions
$\lambda,\mu$, we define the number $K_{\lambda,\mu}$ to be the number
of semistandard tableaux of shape $\lambda$ and weight $\mu$.  From
the previous definitions, one can see that a consequence of the
symmetry of the Schur functions is that
\begin{align} \label{eq:stom}
  s_{\lambda} = \sum_{\mu}^{} K_{\lambda,\mu} m_{\mu} \;. 
\end{align}

There is a standard inner product on the vector space of symmetric
functions (known as the \emph{Hall inner product}), defined by setting 
\[ \langle s_\lambda, s_\mu \rangle = 
  \begin{cases} 
    1 \quad \text{ if } \lambda=\mu \\ 0 \quad \text{otherwise.} 
  \end{cases} 
\]

The following proposition is a basic, but very useful, fact of linear
algebra.  
\begin{Prop} \label{Prop:DualBasis}
  If $\left( \left\{ a_\lambda \right\}, \left\{ a^*_\lambda \right\}
  \right)$ and $\left( \left\{ b_\lambda \right\}, \left\{ b^*_\lambda
  \right\} \right)$ are two pairs of dual bases for an inner-product
  space, and 
  \begin{align}\label{eq:dba}
    a_\lambda = \sum_{\mu}^{} M_{\lambda,\mu} b_\mu \;,
  \end{align}
  then 
  \begin{align}\label{eq:dbb}
    b^{*}_\mu = \sum_{\lambda}^{} M_{\lambda,\mu} a^*_\lambda \;.
  \end{align}
\end{Prop}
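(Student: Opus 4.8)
The plan is to prove this by a direct computation using the defining property of dual bases, namely that $\langle a_\lambda, a^*_\mu \rangle = \delta_{\lambda,\mu}$ and $\langle b_\lambda, b^*_\mu \rangle = \delta_{\lambda,\mu}$. The key idea is that the coefficient of a basis vector in an expansion can be extracted by pairing against the appropriate dual basis element.

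First I would expand an arbitrary element in the $b^*$ basis with unknown coefficients, writing $b^*_\mu = \sum_\lambda c_{\lambda,\mu} a^*_\lambda$ for some scalars $c_{\lambda,\mu}$ to be determined. To isolate a single coefficient $c_{\nu,\mu}$, I would pair both sides with $a_\nu$, using bilinearity of the inner product and the duality relation $\langle a_\nu, a^*_\lambda \rangle = \delta_{\nu,\lambda}$ to obtain
\begin{align}
c_{\nu,\mu} = \langle a_\nu, b^*_\mu \rangle \;.
\end{align}
Next I would compute the same inner product a second way, using the hypothesis \eqref{eq:dba} that $a_\nu = \sum_\kappa M_{\nu,\kappa} b_\kappa$. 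Substituting this and applying the duality relation $\langle b_\kappa, b^*_\mu \rangle = \delta_{\kappa,\mu}$ yields
\begin{align}
\langle a_\nu, b^*_\mu \rangle = \sum_\kappa M_{\nu,\kappa} \langle b_\kappa, b^*_\mu \rangle = M_{\nu,\mu} \;.
\end{align}
Comparing the two evaluations gives $c_{\nu,\mu} = M_{\nu,\mu}$, which upon relabeling the summation index is exactly the claimed identity \eqref{eq:dbb}.

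There is essentially no hard part here: the statement is a clean piece of linear algebra, and the only thing to keep straight is which variable is the summation index and which is the free index in each pairing. The one point requiring minor care is the bilinearity step, where one must be sure the inner product is being applied in the correct slot for each basis so that the Kronecker deltas collapse the sums correctly; beyond that, the argument is symmetric in the roles of the two dual-basis pairs, which is what makes the transpose-like relationship between \eqref{eq:dba} and \eqref{eq:dbb} emerge. I expect the entire proof to fit in a few lines once the two evaluations of $\langle a_\nu, b^*_\mu \rangle$ are written down.
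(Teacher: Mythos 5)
Your proof is correct and rests on exactly the same computation as the paper's: evaluating $\langle a_\lambda, b^*_\mu \rangle$ via the duality relations, once using \eqref{eq:dba} and once against the expansion of $b^*_\mu$ in the $\{a^*_\lambda\}$ basis. The paper phrases this slightly more symmetrically (showing both \eqref{eq:dba} and \eqref{eq:dbb} are equivalent to the single statement $\langle a_\lambda, b^*_\mu \rangle = M_{\lambda,\mu}$), but the substance is identical.
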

\begin{proof}
  Pairing both sides of (\ref{eq:dba}) with $b_\mu^*$ gives $\langle
  a_\lambda, b_\mu^* \rangle = M_{\lambda,\mu}$. Similarly, pairing
  both sides of (\ref{eq:dbb}) with $a_\lambda$ gives $\langle
  b_\mu^*, a_\lambda \rangle = M_{\lambda, \mu}$. 
  (\ref{eq:dba}) and (\ref{eq:dbb}) are thus equivalent.
\end{proof}

The set of \emph{complete homogeneous symmetric functions},
$\{h_\lambda\}$, are defined to be the basis that is dual 
to the monomial symmetric
functions.  An immediate consequence of (\ref{eq:stom}) and
Proposition~\ref{Prop:DualBasis} is that 
\begin{align} \label{eq:htos}
  h_\mu = \sum_{\lambda}^{} K_{\lambda,\mu} s_\lambda \;.
\end{align}

\subsection{$G$-functions}
Buch introduced the combinatorial notion of set-valued tableaux 
in \cite{B02} to give a new characterization for $G$-functions and 
to prove an explicit formula for
the structure constants of the Grothendieck ring of a Grassmannian
variety with respect to its basis of Schubert structure sheaves.
It is this definition of $G$-functions we use here.

\begin{Def} \label{Def:SVT}
  A \emph{set-valued tableau} of shape $\lambda$ is a filling of the
  cells in the Ferrers diagram of $\lambda$ with sets of positive
  integers, such that 
  \begin{itemize}
    \item the maximum element in any cell is weakly smaller than the
      minimum element of the cell to its right, and
    \item the maximum element in any cell is strictly smaller than the
      minimum entry of the cell above it.
  \end{itemize}
\end{Def}
Another way to view this definition is by saying that the selection of
a single element from each cell (in any possible way) will always give
a semistandard tableau.
\begin{Ex} \label{Ex:SVT}
  A set-valued tableau of shape $(3,2)$ is 
  \newcommand{\Ca}{1,2} 
  \newcommand{\Cb}{2,3}  
  \newcommand{\Cc}{4,5,6}  
\[ S=
  \Yboxdim{20pt}{\tiny\young(3\Cc,\Ca\Cb 3)}\;. \] 
We have omitted the set braces, `$\{$' and `$\}$', here and throughout for clarity
of exposition.
\end{Ex}

The \emph{evaluation} of a set-valued tableau $S$ is the composition
$\alpha = (\alpha_i)_{i \ge 1}$ where $\alpha_i$ is the total number
of times $i$ appears in $S$.  The evaluation of the
tableau in Example~\ref{Ex:SVT} is $(1,2,3,1,1,1)$.  The collection of
all set-valued tableaux of shape $\lambda$ will be denoted
$SVT(\lambda)$, and the subset of these with evaluation $\alpha$ will
be denoted $SVT(\lambda,\alpha)$. We write $k_{\lambda,\mu}$ for the
number of set-valued tableaux of shape $\lambda$ and evaluation $\mu$.
We will typically denote a set-valued tableau with the letter $S$.
Finally, we define the sign of a set-valued tableau, $\varepsilon(S)$,
to be the number elements minus the number of cells:
\[ \varepsilon(S) = |ev(S)| - |shape(S)| \;. \]

A {\it multicell} will refer to a cell in $S$ that contains more than
one letter.  Note that when $S$ has no multicells we view $S$, as a
usual semistandard tableau. In this case, $|ev(S)|=|shape(S)|$, and
$\varepsilon(S) = 0$.

\begin{Def} \label{Def:Grothendiecks}
  For any partition $\lambda$, the Grothendieck function $G_\lambda$ is
  defined by
  \begin{align*}
    G_\lambda &= \sum_{\mu}^{} (-1)^{|\mu| - |\lambda|}
    k_{\lambda,\mu} m_\mu
    = \sum_{S \in SVT(\lambda)}^{} (-1)^{\varepsilon(S)} x^{ev(S)}
  \end{align*}
\end{Def}

For terms where $|\mu| = |\lambda|$, $k_{\lambda\mu}=K_{\lambda\mu}$
since there are no multicells.  Hence
$G_\lambda$ equals $s_\lambda$ plus higher
degree terms.  Since the $G_\lambda$ are known to be symmetric
functions, they therefore form a basis for the appropriate 
completion of $\Lambda$. 

Applying Proposition~\ref{Prop:DualBasis} to this definition gives
rise to the basis $\{g_\lambda\}$ that is dual to $\{G_\lambda\}$
by way of the system, 
\begin{align} \label{eq:htog2}
  h_\mu &= \sum_{\mu}^{} (-1)^{|\mu| - |\lambda|} k_{\lambda,\mu}
\, g_\lambda 
\,,
\end{align}
over partitions $\mu$.  These $g$-functions $g_\lambda$ 
were studied explicitly by Lam and Pylyavskyy in \cite{LP07} where they
showed that $g_\lambda$ can be described as a certain
weight generating function for \emph{reverse plane partitions}.

\begin{Def} \label{Def:PlanePartitions}
   A \emph{reverse plane partition} of shape $\lambda$ is a filling of
   the cells in the Ferrers diagram of $\lambda$ with positive integers,
   such that the entries are weakly increasing in rows and columns.
\end{Def}
\begin{Ex} \label{Ex:PP}
  A reverse plane partition of shape $(3,2)$ is $\tiny\young(12,112)\;$.
\end{Ex}

Following Lam and Pylyavskyy (and differing from some other
conventions) we define the \emph{evaluation} of a reverse plane
partition to be the composition $\alpha = (\alpha_i)_{i \ge 1}$
where $\alpha_i$ is the total number of columns in which $i$ appears.
The evaluation of the reverse plane partition in
Example~\ref{Ex:PP} is $(2,2)$.  The collection of all reverse plane
partitions of shape $\lambda$ will be denoted $RPP(\lambda)$ and the
subset of these with evaluation $\alpha$ will be denoted
$RPP(\lambda,\alpha)$.  We will typically use the letter $R$ to refer
to a reverse plane partition.

\begin{Thm}[Lam-Pylyavskyy] \label{Thm:LP}
  The polynomials $g_\lambda$ have the expansion
  \begin{align*}
    g_\lambda &= \sum_{R \in RPP(\lambda)}^{} \bx^{ev(R)} \;.
  \end{align*}
\end{Thm}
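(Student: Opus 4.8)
The plan is to verify that the reverse-plane-partition generating function $\tilde g_\lambda := \sum_{R \in RPP(\lambda)} \bx^{ev(R)}$ satisfies the very system \eqref{eq:htog2} that defines $g_\lambda$, and then to invoke uniqueness. The transition matrix in \eqref{eq:htog2} has entries $(-1)^{|\mu|-|\lambda|}k_{\lambda,\mu}$; its degree-preserving part (the terms with $|\lambda|=|\mu|$) is the ordinary Kostka matrix $K_{\lambda,\mu}$, which is unitriangular with respect to dominance, while all remaining entries strictly decrease the degree. Hence the matrix is invertible over $\bZ$, and each $g_\lambda$ is one fixed integer combination of the $h_\mu$. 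Consequently, if I can establish the formal power-series identity
\[ h_\mu = \sum_\lambda (-1)^{|\mu|-|\lambda|} k_{\lambda,\mu}\, \tilde g_\lambda \,, \]
then the same triangular inversion forces $\tilde g_\lambda = g_\lambda$, and as a bonus exhibits $\tilde g_\lambda$ as a symmetric function.

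To prove this identity I would compare coefficients of monomials. Recall that the coefficient of $\bx^\nu$ in $h_\mu$ is the number of $\naturals$-matrices with row sums $\mu$ and column sums $\nu$, that $k_{\lambda,\mu}=|SVT(\lambda,\mu)|$, and that $(-1)^{|\mu|-|\lambda|}=(-1)^{\varepsilon(S)}$ for every $S \in SVT(\lambda,\mu)$. The identity to be shown then reduces to the purely combinatorial statement
\[ \#\{\naturals\text{-matrices with row sums }\mu,\ \text{column sums }\nu\} = \sum_{\lambda}\ \sum_{\substack{S \in SVT(\lambda,\mu)\\ R \in RPP(\lambda,\nu)}} (-1)^{\varepsilon(S)} \,, \]
which is the content-graded form of the Cauchy identity $\sum_\lambda G_\lambda(\bx)\,\tilde g_\lambda(\mathbf y) = \prod_{i,j}(1-x_i y_j)^{-1}$.

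The left-hand side is a positive count while the right-hand side is signed, so the argument must generate cancellation. I would organize this in two stages. First, a sign-reversing involution on the pairs $(S,R)$ for which $S$ has a multicell: an uncrowding-type move that relocates a repeated letter of $S$ into a new cell (preserving $ev(S)=\mu$ but changing $|shape(S)|$ by one, hence flipping the parity of $\varepsilon(S)$) while correspondingly modifying $R$ so as to keep it a reverse plane partition of the new shape with unchanged evaluation $\nu$. This should cancel in pairs all terms with $\varepsilon(S)>0$, leaving only genuine semistandard $S$ (with $\varepsilon(S)=0$). Second, an $RSK$-type bijection between the surviving pairs $(S,R)$ of a common shape and the $\naturals$-matrices with the prescribed row and column sums.

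I expect the first stage to be the main obstacle. The involution must simultaneously respect the set-valued semistandard conditions on $S$, the weakly-increasing row and column conditions on $R$, and the nonstandard \emph{column-counting} evaluation of $R$, all while being an exact sign-reversing pairing. Matching the column-repetition structure of the reverse plane partitions against the multicells of the set-valued tableaux, and verifying that the resulting map is a genuine involution with the correct fixed points, is the delicate point; once this cancellation is in place, the residual correspondence is a routine variant of column-insertion $RSK$.
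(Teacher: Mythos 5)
The paper itself contains no proof of this statement: it is quoted from Lam--Pylyavskyy \cite{LP07} and used as an input to Theorem~\ref{Thm:main}. So your attempt must be judged on its own merits. Your reduction step is sound: since $k_{\lambda,\mu}=0$ unless $|\lambda|\le|\mu|$, and the degree-preserving block of the matrix $(-1)^{|\mu|-|\lambda|}k_{\lambda,\mu}$ is the unitriangular Kostka matrix, the system \eqref{eq:htog2} determines the $g_\lambda$ uniquely as finite integer combinations of the $h_\mu$, and extracting the coefficient of $\bx^\nu$ correctly converts what must be shown into the signed count you state. The genuine gap is your ``first stage.'' As literally described it cannot work: a sign-reversing involution cannot ``cancel in pairs all terms with $\varepsilon(S)>0$, leaving only genuine semistandard $S$,'' because sign reversal forces each pair with $\varepsilon(S)=1$ to be matched with a pair having $\varepsilon(S)=0$ or $\varepsilon(S)=2$; moreover the pairs in which $S$ is semistandard but $R$ repeats a letter within a column all carry sign $+1$ and must also be cancelled, necessarily against multicell pairs. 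For example, with $\mu=(1,1)$, $\nu=(1)$, the pair with $S=\{1,2\}$ in a single cell and $R$ the one-cell filling $1$ (sign $-1$) must cancel against the pair with $S$ the standard column of shape $(1,1)$ and $R$ the column with two $1$'s (sign $+1$). The correct fixed-point set is the pairs in which $S$ has no multicell \emph{and} $R$ is column-strict; only then does classical RSK identify the survivors with $\naturals$-matrices of row sums $\mu$ and column sums $\nu$. You gesture at this at the end (``matching the column-repetition structure \dots against the multicells''), but no construction is given, and that construction is where the entire content of the theorem lives; the rest of your proposal is routine.

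It is worth noting that the missing ingredient is supplied by this very paper, for a different purpose: the involution $\iota$ of section~\ref{sec:involution}, built from dilation and contraction of a pair $(S,R)$ of common shape. It changes $\varepsilon(S)$ by exactly one, its fixed points are exactly the pairs of semistandard tableaux, and by Properties~\ref{Prop:dilweight} and~\ref{Prop:revweight} it preserves the Knuth classes of $w(S)$ and $w(R)$, hence their contents; since the content of $w(S)$ is $ev(S)$ and the content of $w(R)$ is $ev(R)$ (with the column-counting convention), $\iota$ restricts to the set of pairs with $ev(S)=\mu$, $ev(R)=\nu$. This restriction, followed by RSK on the fixed points, proves precisely your combinatorial identity and hence Theorem~\ref{Thm:LP}. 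So your plan is completable, and it is a genuinely different route from the paper's logic: the paper assumes the Lam--Pylyavskyy expansion and uses $\iota$ to prove Theorem~\ref{Thm:main}, whereas you would use $\iota$ to prove the expansion itself, making the paper self-contained. But as submitted, the crux of the argument is named rather than proved.
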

We note that when $|\mu| = |\lambda|$, the entries must be strictly
increasing up columns; hence $g_\lambda$ is equal to $s_\lambda$ plus
lower degree terms.

\section{General formula for $K$-theoretic expansions}
\label{sec:main}

Our combinatorial formula for the $G$- and the $g$-expansion 
of any function with a tableaux Schur expansion is in terms 
of reverse plane partitions and set-valued tableaux, respectively.
The formula relies on a natural association of these objects
with semistandard tableaux which comes about by a careful choice 
of reading word for set-valued tableaux and reverse plane partitions.

\begin{Def} \label{Def:SVTReading}
  The \emph{reading word} of a set-valued tableau $S$, denoted
  by $w(S)$, is the sequence $(w_1, w_2, \dots, w_n)$ obtained by
  listing the elements of $S$ starting from the top-left corner,
  reading each row according to the following procedure, and then
  continuing down the rows.  In each row, we first ignore the smallest
  element of each cell, and read the remaining elements from right to
  left and from largest to smallest within each cell.  Then we read
  the smallest element of each cell from left to right, and proceed to
  the next row.
\end{Def}
\begin{Ex} \label{Ex:SVTword}
  \newcommand{\Ca}{1,2} 
  \newcommand{\Cb}{2,3}  
  \newcommand{\Cc}{4,5,6}  
  The reading word of  the set-valued tableau $S$ in Example~\ref{Ex:SVT}
  is $w(S)=(6,5,3,4,3,2,1,2,3)$.
\end{Ex}

\begin{Def} \label{Def:PPReading}
  Given a reverse plane partition $R$, circle in each column only the
  bottommost occurrence of each letter.  The \emph{reading word} of
  $R$, which we denote by $w(R)$, is the sequence $(w_1, w_2, \dots,
  w_n)$ obtained by listing the circled elements of $R$ starting from
  the top-left corner, and reading across each row and then continuing
  down the rows.
\end{Def}
\begin{Ex}
The reverse plane partition $R$ in 
Example~\ref{Ex:PP} has $w(R)=(2,1,1,2)$.
\end{Ex}

This given, for a set $\bT(\alpha)$ of semistandard
tableaux, we define sets $\bS(\alpha)$ and $\bR(\alpha)$ of set-valued
tableaux and reverse plane partitions, respectively, by 
\begin{align*}
  S \in \bS(\alpha) &\text{ if and only if } RSK(w(S)) \in \bT(\alpha)
  \,, \text{ and }\\
  R \in \bR(\alpha) &\text{ if and only if } RSK(w(R)) \in \bT(\alpha)
  \,.
\end{align*}
Similarly, we can extend any function $wt_\alpha$ defined on
$\bT(\alpha)$ to $\bS(\alpha)$ and $\bR(\alpha)$ by setting
  \[ wt_\alpha(X) = wt_\alpha(RSK(w(X)))\,, \] 
for any $X$ in $\bS(\alpha)$ or $\bR(\alpha)$.

It is in terms of these definitions that we express
the $G$- and $g$-expansions for functions with a tableaux-Schur
expansion.

\begin{Thm} \label{Thm:main}
  Given $f_\alpha$ with a tableaux Schur expansion,
  \begin{align} \label{eq:fins}
    f_\alpha = \sum_{T \in \bT(\alpha)}^{} wt_\alpha(T) s_{sh(T)} \,,
  \end{align}
  we have 
  \begin{align} 
  \label{eq:finG}
  f_\alpha  &=\sum_{R \in \bR(\alpha)}^{} wt_\alpha(R) G_{sh(R)}\\
\label{eq:fing}
     &= \sum_{S \in \bS(\alpha)}^{} wt_\alpha(S) \,
    (-1)^{\varepsilon(S)}\, g_{sh(S)}\,.
  \end{align}
\end{Thm}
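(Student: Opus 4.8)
The plan is to derive both expansions from their atomic case, where $f_\alpha$ is a single Schur function, and then assemble the general statement by linearity. Since $\bR(\alpha)=\{R:RSK(w(R))\in\bT(\alpha)\}$ is the disjoint union over $T\in\bT(\alpha)$ of the fibers $\bR_T:=\{R:RSK(w(R))=T\}$, and $wt_\alpha$ is by construction constant on each fiber with value $wt_\alpha(T)$, formula \eqref{eq:finG} follows termwise from \eqref{eq:fins} once we prove the fiber identity
\[
  \sum_{R\in\bR_T} G_{sh(R)}=s_{sh(T)}
\]
for every fixed $T\in SST(\mu)$; the analogous reduction, using $\bS_T:=\{S:RSK(w(S))=T\}$, turns \eqref{eq:fing} into
\[
  \sum_{S\in\bS_T}(-1)^{\varepsilon(S)}\,g_{sh(S)}=s_{sh(T)}.
\]
When $\bT(\alpha)=\{T\}$ these are exactly the $G/g$--to--Schur transition formulas promised in the introduction, so the entire theorem rests on the two displayed identities.

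To prove the first fiber identity I would expand $G_{sh(R)}$ into monomials by Definition~\ref{Def:Grothendiecks} and compare the coefficient of a fixed monomial $\bx^\beta$ on both sides. Because the coefficient of $\bx^\beta$ in $s_\mu$ is the Kostka number $K_{\mu,\beta}$, and because for $S\in SVT(sh(R),\beta)$ one has $|\beta|-|sh(R)|=\varepsilon(S)$, the identity is equivalent to the signed count
\[
  \sum_{(R,S)}(-1)^{\varepsilon(S)}=K_{\mu,\beta},
\]
the sum ranging over all pairs $(R,S)$ with $R\in\bR_T$ and $S\in SVT(sh(R),\beta)$. Since $\varepsilon(S)\ge 0$ always, every negative contribution must be cancelled, and the surviving terms must be enumerated by $K_{\mu,\beta}$. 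Using Theorem~\ref{Thm:LP} in place of Definition~\ref{Def:Grothendiecks}, the second fiber identity reduces in the same way to a signed count over pairs $(S,R)$ with $S\in\bS_T$ and $R\in RPP(sh(S),\beta)$.

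The heart of the proof, and the step I expect to be the main obstacle, is a sign-reversing, weight-preserving involution on these sets of pairs. On the $G$--side the idea is an uncrowding/crowding toggle: if $S$ carries a multicell, remove its distinguished excess entry, chosen canonically via the reading order of Definition~\ref{Def:SVTReading}, and simultaneously add a box to the common shape of $R$ and $S$, repeating the appropriate entry down the new column of $R$; the inverse move detects when $sh(R)$ is strictly larger than $\mu$ and crowds a box back. Each application changes $\varepsilon(S)$ by exactly one, so the map is sign-reversing, and it must be engineered to keep $sh(R)=sh(S)$ and fix the content $\beta$, and to leave $RSK(w(R))$ unchanged so that the constraint $R\in\bR_T$ is preserved. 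This last point is precisely where the reading words of Definitions~\ref{Def:SVTReading} and~\ref{Def:PPReading} pay off, since $RSK$ and jeu-de-taquin are invariant under Knuth transformations; proving that the toggle is genuinely well defined, involutive, and Knuth--class preserving is the delicate combinatorial core. The $g$--side is the mirror image, with the roles of $R$ and $S$ interchanged and the excess located in a repeated column entry of $R$.

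Finally I would identify the fixed points. An RPP of shape $\mu$ lying in $\bR_T$ cannot have a repeated column entry, since such a repeat would shorten the circled reading word and force $RSK(w(R))$ to have fewer than $|\mu|$ cells; it must therefore be column-strict and hence equal to $T$ itself. Consequently the configurations on which the toggle acts trivially are exactly the pairs $(T,S)$ with $S\in SST(\mu,\beta)$ an ordinary semistandard tableau, and these number $K_{\mu,\beta}$, matching the right-hand side. Summing the resulting monomial identities over $\beta$ yields $\sum_{R\in\bR_T}G_{sh(R)}=s_\mu$, and the mirror argument gives the $g$--identity; by the reduction of the first paragraph this establishes \eqref{eq:finG} and \eqref{eq:fing} simultaneously.
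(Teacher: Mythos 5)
Your overall skeleton matches the paper's: after expanding the $G$- and $g$-functions into monomials (Definition~\ref{Def:Grothendiecks}, Theorem~\ref{Thm:LP}), both \eqref{eq:finG} and \eqref{eq:fing} reduce to showing that the non-semistandard terms cancel in pairs, and the paper achieves this with a sign-reversing, weight-preserving involution on pairs $(S,R)$ consisting of a set-valued tableau and a reverse plane partition of the same shape, whose fixed points are exactly the pairs of semistandard tableaux. Your fiber-by-fiber reduction to $\bT(\alpha)=\{T\}$, the coefficient-of-$\bx^\beta$ reformulation, and the identification of the fixed points with $SST(\mu,\beta)$ are all correct and consistent with that. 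One structural economy you miss: the paper constructs a \emph{single} involution preserving both $RSK(w(S))$ and $RSK(w(R))$ at once, so one map settles the $G$- and the $g$-expansion simultaneously, rather than two mirror-image toggles.

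The genuine gap is that the involution itself --- which is the entire mathematical content of the theorem --- is only gestured at, and the toggle you sketch fails as stated, in two ways. First, the growing move ``add the new box to $R$ and repeat the appropriate entry down the new column'' need not produce a reverse plane partition: the position of the new box is dictated by the RSK-dilation of $S$, and if that corner is $(i,j)$, the entry copied from $(i-1,j)$ can be strictly smaller than the entry in cell $(i,j-1)$. For instance, take $R$ of shape $(2,1)$ with bottom row $1\,3$ and second row $5$, paired with $S$ of the same shape whose dilation creates the corner $(2,2)$; copying the $3$ upward puts $5,3$ in the top row. The paper's dilation of a pair instead places a marker at the new corner and performs a jeu-de-taquin slide to the south-west down to row $row(S)$, creating the repeated entry \emph{there}; the slide is precisely what preserves both the RPP condition and the Knuth class (section~\ref{sec:involution}). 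Second, your rule is not well defined when both moves are available, i.e.\ when $S$ has a multicell \emph{and} $R$ has a repeated column entry: ``uncrowd whenever $S$ has a multicell'' is not involutive, since after one uncrowding $S$ may still have a multicell and the map would uncrowd again rather than undo itself. The paper resolves exactly this with its case analysis --- dilate when $row(S)>row(R)$, contract when $row(R)>row(S)$, and on a tie compare $x(S)$ with the bumped entry $y(S,R)$ --- and that comparison is what guarantees that a dilation is followed, on the next application of the map, by the inverse contraction. Until you supply these two ingredients, with proofs that the moves preserve Knuth classes and are mutually inverse, the proposal is a plan for a proof rather than a proof.
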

\begin{proof}
Writing the $s_{sh(T)}$ on the right hand side of
equation~\eqref{eq:fins} as the sum over tableaux gives
\begin{align} \label{eq:combf}
  f_\alpha &=
    \sum_{T \in \bT(\alpha)}^{} wt_\alpha(T) \sum_{T'
    \in SST(sh(T))} \bx^{ev(T')}.
\end{align}
Similarly, writing the $g_{sh(S)}$ in (\ref{eq:fing}) as the sum
over reverse plane partitions as given by Theorem~\ref{Thm:LP}, we obtain
\begin{align} \label{eq:combgf}
     \sum_{S \in \bS(\alpha)}^{} wt_\alpha(S) \,
    (-1)^{\varepsilon(S)}\, g_{sh(S)}
   &=
    \sum_{S \in \bS(\alpha)}^{} wt_\alpha(S)
\, (-1)^{\varepsilon(S)}
    \sum_{R \in RPP(sh(S))}^{} \bx^{ev(R)} \;.
\end{align}
Since every semistandard tableau can be viewed as a set-valued tableau
and as a reverse plane partition, every monomial term 
in (\ref{eq:combf}) also appears as a term in \eqref{eq:combgf}.  
Thus to prove that \eqref{eq:combf} equals \eqref{eq:combgf}, it 
suffices to show that the terms in (\ref{eq:combgf}) not occurring
in (\ref{eq:combf}) sum to zero.  

In the same way, writing the $G_{sh(S)}$ in (\ref{eq:finG}) as the sum
over set-valued tableaux according to Definition~\ref{Def:Grothendiecks}, we 
find that
\begin{align} \label{eq:combGf}
\sum_{R \in \bR(\alpha)}^{} wt_\alpha(R) \,G_{sh(R)}
   &=
    \sum_{R \in \bR(\alpha)}^{} wt_\alpha(R)
    \sum_{S \in SVT(sh(R))}^{} (-1)^{\varepsilon(S)} \,\bx^{ev(S)} \;.
\end{align}
Again, every term in (\ref{eq:combf}) appears in (\ref{eq:combGf}) and
it suffices to show that the extra terms in \eqref{eq:combGf}
sum to zero.



From these observations, we can simultaneously prove that
\eqref{eq:combf} equals \eqref{eq:combgf} and \eqref{eq:combGf}
by producing a single sign-reversing and weight-preserving
involution. To be precise, in the next section we introduce a map 
$\iota(S,R)=(S',R')$ and prove that it is an involution on the set 
of pairs of $(S,R)$, where $S$ is a set-valued tableau and $R$ 
is a reverse plane partition of the same shape,
satisfying the properties:
\begin{enumerate}
\item $\iota(S,R)=(S,R)$ if and only if $S$ and $R$ are both
semistandard tableaux,
\item $\varepsilon(S)=\varepsilon(S') \pm 1$ when $S$ is not a
  semistandard tableau, and
\item $(RSK(w(S)), RSK(w(R)))=(RSK(w(S')), RSK(w(R')))$\,.
\qedhere
\end{enumerate} 
\end{proof}

\section{The involution} \label{sec:involution}

We introduce basic operations on set-valued tableaux and reverse 
plane partitions called {\it dilation} and {\it contraction}
and will then define the involution $\iota$ in these terms.
To this end, first setting some notation for set-valued tableaux 
and reverse plane partitions will be helpful.  

Given a set-valued 
tableau $S$, let $row(S)$ be the highest row containing a multicell.  
Let $S_{>i}$ denote the subtableau formed by taking only
rows of $S$ lying strictly higher than row $i$.
For a reverse plane partition $R$,
let $row(R)$ denote the highest row containing an entry 
that lies directly below an equal entry. We use the convention that
when $S$ has no multicell, $row(S)=0$ and when no column of $R$ 
has a repeated entry, $row(R)=0$.

\begin{Def}
Given a set-valued tableau $S$, let $c$ be the rightmost multicell 
in $row(S)$ and define $x=x(S)$ to be the largest entry in $c$. 
The dilation of $S$, $di(S)$, is constructed from $S$ by 
removing $x$ from $c$ and inserting it, via $RSK$, into $S_{>row(S)}$.
\end{Def}

\begin{Ex}
Since $row(S)=2$ and $x(S)=6$, 
\[ \newcommand{\Ca}{1,2} 
      \newcommand{\Cb}{3,4}  
      \newcommand{\Cc}{4,5,6}
      \newcommand{\Cd}{2,3}
      \newcommand{\Ce}{4,5}    
di\left(
      \Yboxdim{19pt}
{\tiny
      \young(7,67,\Cb \Cc 8,1\Ca \Cd 5)}
\right)
      =\Yboxdim{19pt}
{\tiny \young(77,66,\Cb \Ce 8,1\Ca \Cd 5)}
\]
\end{Ex}

\begin{Property}
\label{Prop:dilsvt}
For any set-valued tableau $S$, $di(S)$ is a set-valued tableau.
\end{Property}
\begin{proof}
Let $c$ be the rightmost multicell in row $i$
and let $x=x(S)$.
Rows weakly lower than row $i$ in $S'=di(S)$ form a set-valued 
tableau since $S$ is set-valued to start.  For rows above row $i$,
first note that the cell above $c$ is empty or contains a letter 
strictly greater than $x$.  Thus, the insertion of $x$ into row 
$i+1$ puts $x$ in a cell that is weakly to the left of $c$.  Moreover, 
all entries in row $i$ of $S'$ that are
weakly to the left of cell $c$ are strictly smaller than $x$ since
$c$ is a multicell in $S$.  Thus, in $S'$, $x$ is strictly larger
than all entries in the cell below it.  The claim then follows
from usual properties of RSK insertion. 
\end{proof}

\begin{Property}
\label{Prop:dilweight}
For any set-valued tableaux $S$ and $S' = di(S)$,
\[ RSK(w(S)) = RSK(w(S')) \,.\]
\end{Property}
\begin{proof}
Let $i=row(S)$.  For some word $v$, $w(S)$ can be factored as 
$w(S_{>i}) \cdot x(S) \cdot v$ since $x(S)$ is the first letter 
in the reading word of row $i$.  The definition of dilation 
then gives that the word of $S'$ is 
$w(S_{>i}\from x)\cdot v$.  Thus the Knuth equivalence classes
of $w(S)$ and $w(S')$ are the same since
RSK insertion preserves Knuth equivalence.
\end{proof}

We remark that it is Property~\ref{Prop:dilweight} which 
motivated our definition for the reading word of a set-valued
tableau.

\begin{Def}
Given a reverse plane partition $R$, let $i = row(R)$ 
and let $c$ be the rightmost cell in row $i+1$
that contains the same entry as the cell below it.
The contraction of $R$, $co(R)$, is constructed by replacing
$c$ with a marker and using reverse jeu-de-taquin to slide 
this marker up and to the right until it exits the diagram.  
\end{Def}
\begin{Ex}
\[
co\left({\tiny \young(33,223,112,111)}\right) =
    {\tiny \young(3,233,122,111)} 
\]
\end{Ex}

\begin{Property}
\label{Prop:revweight}
For any reverse plane partitions $R$ and $R' = co(R)$,
\[ RSK(w(R)) = RSK(w(R')) \,.\]
\end{Property}
\begin{proof}
Let $i=row(R)$ and note that the portion of the reading word of $R$
obtained by reading rows weakly below row $i$ is unchanged
by contraction.  Moreover, the rows of $R$ higher than $i+1$
form a semistandard tableau and thus the jeu-de-taquin 
moves in these rows preserve Knuth equivalence.  In row $i+1$,
any initial rightward  slide of the marker does not change 
the reading word since every letter to the right of the 
marker is strictly greater than the letter below it (and 
hence, strictly greater than the letter below it and to its left).  
It thus suffices to check that the move taking 
    the marker from row $i+1$ to row $i+2$  preserves Knuth equivalence.
    To this end, let $\widehat R$ denote the stage of the jeu-de-taquin process at which
    the next move takes the empty marker from row $i+1$ to $i+2$.  Consider
    the subtableau $\widehat R^*$ consisting only of the letters in
    rows $i+1$ and $i+2$ which contribute to the reading word.
    In general, the form of $\widehat R^*$ will be 
    \[ \tiny\young(uxv,w\bullet y) \]
    where $y,v$ are weakly increasing words with $\ell(y) \ge \ell(v)$
    and each $y_i < v_i$, $x$ is a letter with $x \le y_1 < v_1$, and
    $u,w$ are weakly increasing words with $\ell(w) \le \ell(u)$ (since an entry 
    of $w$ may not be the lowest in its column and thus not part of the reading word
whereas all entries of $u$ contribute to reading word since they are in row $i+2$).
    It remains to show the Knuth equivalence of the words
    $uxvwy$ and $uvwxy$.  We first note that the insertion of the word
    $uw$ is 
    \[ \newcommand{\upr}{u'} \newcommand{\uppr}{u''}
    \young(\upr,w\uppr) \]
    for some decomposition of $u$ into disjoint subwords $u', u''$.
    With this observation, it is not hard to verify that both words
    $uxvwy$ and $uvwxy$ insert to 
    \[ \newcommand{\upr}{u'} \newcommand{\uppr}{u''}
       \young(\upr v,w\uppr xy)\]
    and hence these words are Knuth equivalent.  
\end{proof}

\begin{Def}
For a set-valued tableau $S$ and a reverse plane partition $R$
of the same shape as $S$, define the map 
$$
\iota: (S,R) \rightarrow (S',R')
$$
according to the following four cases
(where $y(S,R)$, and the dilation and contraction of a pair
are defined below):
\begin{enumerate}
\item if $row(S)=row(R)=0$, the pair $(S,R)$ is a fixed point
\item if $i=row(S)>row(R)$, the pair is dilated
\item if $i=row(R)>row(S)$, the pair is contracted
\item if $row(R)=row(S)$, the pair is dilated 
when $x(S)\geq y(S,R)$ and is otherwise contracted.
\end{enumerate}
In case (2), $S'=di(S)$ and $R'$ is constructed
from $R$ by replacing the cell of $R$ in position
$(i,j)=S' \setminus S$ with an empty marker and sliding 
this marker to the south-west using jeu-de-taquin.  
When the marker reaches row $i$, we replace it by the entry in the cell
directly above it.  In case (3), $R'=co(R)$. 
Construct $S'$ from $S$ by deleting the cell of $S$ in 
position $(i,j)=R\setminus R'$ and reverse RSK bumping
its entry until the entry $y=y(S,R)$ is bumped from
row $i+1$. Finally, add entry $y$ to the unique cell of 
row $i$ where $y$ is maximal in its cell and the non-decreasing 
row condition is maintained.
Case (4) reduces to case (2) or (3), determined by comparing
the entry $y(S,R)$ to the number $x(S)$ described in the definition of
dilation.
\end{Def}

  \begin{Ex} \label{Ex:map}
    The involution $\iota$ exchanges the two pairs below:
    \[ \newcommand{\Ca}{1,2} 
      \newcommand{\Cb}{3,4}  
      \newcommand{\Cc}{4,5,6}
      \newcommand{\Cd}{2,3}
      \newcommand{\Ce}{4,5}    
      \left( \; \Yboxdim{24pt}
      \young(7,67,\Cb \Cc 8,1\Ca \Cd 5)\normalsize,\young(2,13,123,1134)
      \; \right) 
      \leftrightarrow \left( \; \Yboxdim{18pt}
      \young(77,66,\Cb \Ce 8,1\Ca \Cd 5)\normalsize, 
      \young(23,12,123,1134) \; \right) \]
The pair $(S,R)$ on the left has $row(S)=row(R)=2$ and $x = y = 6$,
implying that $(S,R)$ is dilated under $\iota$.  The pair $(S',R')$ on 
the right has $row(S)=row(R) = 2$ and $x = 5<y=6$ and is thus contracted.  
Note that ${\varepsilon(S)}={\varepsilon(S')+1}$ and that
    \[ (RSK(w(S)),RSK(w(R)))=(RSK(w(S')), RSK(w(R'))) = 
    \left( \; \tiny\young(7,6,57,46,34,238,1125), \young(3,22,1134)
    \right)\,. \]
Thus, $\iota$ reverses the sign and preserves the weight of
this pair.
  \end{Ex}
 
\begin{Prop}
The map $\iota$ is a sign-reversing and weight-preserving 
involution on the set of pairs of $(S,R)$, where $S$ is 
a set-valued tableau and $R$ is a reverse plane partition 
of the same shape.  
The fixed points of $\iota$ are pairs $(S,R)$
where $S$ and $R$ are both semistandard tableau.
\end{Prop}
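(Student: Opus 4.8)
The plan is to establish the three properties claimed at the end of the proof of Theorem~\ref{Thm:main}, since those properties are exactly what make $\iota$ a sign-reversing, weight-preserving involution with the stated fixed points. First I would dispose of the fixed-point claim and the sign/weight behavior, which follow almost directly from the definitions and the Properties already proved. Case~(1) of the definition says $(S,R)$ is fixed precisely when $row(S)=row(R)=0$; by the conventions set before Definition of dilation, $row(S)=0$ exactly when $S$ has no multicell, \ie when $S$ is a semistandard tableau, and $row(R)=0$ exactly when no column of $R$ has a repeated entry, \ie when $R$ is a semistandard tableau. So the fixed points are exactly the pairs of semistandard tableaux, giving the last sentence. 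For the sign-reversing property, note that a single dilation moves one letter $x$ out of a multicell and RSK-inserts it higher up, so $|ev(S')|=|ev(S)|$ while $|shape(S')|=|shape(S)|+1$, whence $\varepsilon(S')=\varepsilon(S)-1$; contraction is the reverse and raises $\varepsilon$ by one. Weight preservation on the $S$-coordinate is Property~\ref{Prop:dilweight}, and on the $R$-coordinate is Property~\ref{Prop:revweight}; together these give $(RSK(w(S')),RSK(w(R')))=(RSK(w(S)),RSK(w(R)))$, so the common $wt_\alpha$-value is unchanged.

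The substantive content is that $\iota$ is a well-defined involution, and here the work splits into showing (a) that the output pair $(S',R')$ is again a valid pair (a set-valued tableau and a reverse plane partition of a \emph{common} shape), and (b) that $\iota\circ\iota=\mathrm{id}$. For (a), Property~\ref{Prop:dilsvt} already guarantees $S'=di(S)$ is a set-valued tableau in the dilation cases, and the analogous contraction statement for $R$ (that $co(R)$ is a reverse plane partition) should be recorded, presumably by the same reverse-jeu-de-taquin reasoning that underlies Property~\ref{Prop:revweight}. The shape-matching is the delicate part of well-definedness: in case~(2) dilation grows $shape(S)$ by the single box $(i,j)=S'\setminus S$, and the construction of $R'$ is engineered to grow $shape(R)$ by that \emph{same} box, by sliding an empty marker out of position $(i,j)$ and then refilling at row $i$; I would verify that this refilling always succeeds and yields a reverse plane partition whose shape is $shape(R)$ with one box added at $(i,j)$, so $S'$ and $R'$ again share a shape. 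Case~(3) is the mirror image.

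The main obstacle, and the heart of the matter, is the involutivity in (b): I must show that dilation and contraction of pairs are mutually inverse and that the case distinction is respected, \ie that applying $\iota$ to a dilated pair triggers a contraction that undoes it, and vice versa. The key invariant to track is the auxiliary letter $y(S,R)$ and the comparison $x(S)\gtrless y(S,R)$ governing case~(4), together with the values of $row(S)$ and $row(R)$ after one application. Concretely, if $(S,R)$ is dilated to $(S',R')$ because $i=row(S)>row(R)$ (or because $row(S)=row(R)$ with $x(S)\ge y(S,R)$), I would check that in $(S',R')$ one has $row(R')\ge row(S')$ with the right relation forcing case~(3) or the contraction branch of case~(4), and that the cell deleted and the reverse-RSK bumping exactly reverse the RSK-insertion performed in the dilation — this is where the carefully chosen reading words and the matched box $(i,j)$ pay off, since RSK insertion and reverse RSK bumping are genuine inverses once the bumping path is pinned down. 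The analogous check starting from a contracted pair completes the argument. I expect the bookkeeping on $row$-values and on the comparison $x$ versus $y$ to be the fussiest step, because it is what guarantees the four cases partition correctly and pair up under $\iota$; the underlying reversibility of dilation and contraction is then supplied by the inverse relationship between RSK-insertion and jeu-de-taquin slides.
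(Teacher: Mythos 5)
Your proposal is correct and takes essentially the same approach as the paper's own proof: well-definedness via Property~\ref{Prop:dilsvt}, jeu-de-taquin, and the matching of the added/deleted box; involutivity by tracking $row$-values and the comparison of $x(S)$ with $y(S,R)$ so that dilation and contraction trigger one another; and the sign, weight, and fixed-point claims argued exactly as you state, with weight preservation from Properties~\ref{Prop:dilweight} and~\ref{Prop:revweight}. The one step you defer as a ``mirror image'' --- that in the contraction case the bumped letter $y$ has a unique valid landing cell in row $i$ of $S$ --- is where the paper does its real work, using the case-(4) hypothesis $x(S)<y$ to rule out multicells to the right of the landing cell.
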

  \begin{proof}
We first verify that $(S',R')=\iota(S,R)$ is in fact a pair where
$S'$ is a set-valued tableau and $R'$ is a reverse plane partition.
When $\iota$ requires dilation,
Property~\ref{Prop:dilsvt} assures that $S'$ is a valid set-valued tableaux and
it is straightforward to verify that $R'$ is a valid reverse plane partition
by properties of jeu-de-taquin.  

In the case that $\iota$ involves contraction, $R'$ is a reverse plane
partition again by properties of jeu-de-taquin.  Since $i=row(R)\geq row(S)$,
$S_{>i}$ is a semistandard tableau implying by RSK that $S'_{>i}$ is as
well and that $y$ is well-defined.  It remains to check that there is a unique
cell in row $i$ of $S$ into which $y$ can be placed so that it is maximal in
this cell and row $i$ maintains the non-decreasing condition.  We claim that
this cell is the rightmost cell $m$ of row $i$ whose entries are all strictly
less than $y$.  Note that $m$ exists since the cell of $S$ directly below the
cell from which $y$ was bumped has only entries smaller than $y$.  We claim
that $m$ is the unique cell into which $y$ can be placed; namely, no cell to
the right of $m$ contains an element strictly less than $y$.  This is clear
when $i>row(S)$ since then $S$ has no multicells in row $i$.  Otherwise,
conditions of case (4) imply that the largest element of a multicell in row
$i$ is $x(S)<y$.  Hence there are no multicells to the right of $m$ and the
claim follows.  Finally, we observe that the entry in the cell
directly above $m$ (if it exists) must be strictly greater than $y$,
since this cell is weakly to the right of the cell from which $y$ was
bumped.

We now show that $\iota$ is indeed an involution by proving that
if $(S',R')$ is obtained by dilation then $\iota(S',R')$
will require contraction, and vice versa.  Consider the case 
that $\iota$ requires dilation. The definition of $row$ 
implies that $row(R')=i\geq row(S')$ given $i=row(S)\geq row(R)$.
Further, the reversibility of the RSK algorithm and jeu-de-taquin 
on semistandard tableaux give that $y(R',S')=x(S)>x(S')$.
Therefore, applying $\iota$ to $\iota(S,R)$ requires the
contraction case.  The cases in which $\iota$ requires contraction
to start follow similarly.

That $\iota$ is sign-reversing and has the appropriate fixed point 
set is easy to verify from the definition and it remains only to 
show that $\iota$ is weight-preserving. From the definition of $\iota$, and
the fact that $\iota$ is an involution, we have either:
\begin{enumerate}
  \item $R' = co(R)$ and $S = di(S')$, or
  \item $R = co(R')$ and $S' = di(S)$.
\end{enumerate}
In either case, that $\iota$ is weight-preserving follows from
Properties~\ref{Prop:dilweight} and \ref{Prop:revweight}.
\end{proof}

\section{Schur expansions and an alternate proof} \label{sec:altproof}

Lenart proved in \cite{Lenart} that the transition matrices 
between $G$ and Schur functions have a beautiful combinatorial 
interpretation in terms of objects that have since been called 
{\it elegant fillings} in \cite{LP07}.  Here we show how 
the simplest application of Theorem~\ref{Thm:main} gives rise to 
a new interpretation for these transition matrices in terms
of certain reverse plane partitions and set-valued tableaux.

Alternatively, we give a bijection between the elegant fillings 
and these reverse plane partitions/set-valued tableaux.  As a 
by-product, we have an alternate proof for Lenart's result 
following from Theorem~\ref{Thm:main} and vice versa.


\subsection{A new approach to Schur and $G$/$g$-transitions}

\begin{Def}
An elegant filling is a skew semistandard tableaux with the property
that the numbers in row $i$ are no larger than $i-1$.  An elegant
filling whose entries are strictly increasing across rows is called
strict. We let $f_\lambda^{\mu}$ denote the number of elegant
fillings of shape $\lambda / \mu$ and $F_\mu^{\lambda}$ denote the
number of strict elegant fillings of shape $\mu/ \lambda$.  
\end{Def}

\begin{Thm}\cite{Lenart} \label{Thm:Lenart}
  The transition matrices between the Schur functions and the
  $G$-functions are given by the following:
\begin{align}
  \label{lenthmG}
  s_\mu &= \sum_{\lambda}^{} f_{\lambda}^{\mu} G_{\lambda}\\
  G_\lambda &= \sum_{\mu}^{} (-1)^{|\lambda| + |\mu|} 
    F_{\mu}^{\lambda} s_\mu\,.
\end{align}
\end{Thm}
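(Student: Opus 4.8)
The plan is to derive both equations of Theorem~\ref{Thm:Lenart} from Theorem~\ref{Thm:main}, applied to the \emph{trivial} tableaux Schur expansion of a single Schur function. Concretely, I fix a partition $\mu$ and a single tableau $T_0$ of shape $\mu$, and set $\bT(\mu) = \{T_0\}$ with $wt_\mu(T_0) = 1$, so that \eqref{eq:fins} reads $s_\mu = s_\mu$. By the definitions of $\bR(\mu)$ and $\bS(\mu)$, Theorem~\ref{Thm:main} then yields
\begin{align*}
  s_\mu = \sum_{\lambda} \#\{R \in RPP(\lambda) : RSK(w(R)) = T_0\}\, G_\lambda
\end{align*}
together with the parallel $g$-expansion. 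The first step is to observe that these coefficients do not depend on the choice of $T_0$ (only on $\mu$), since the left-hand side is always $s_\mu$; this lets me later pick a convenient $T_0$. The second, crucial, observation is that $RSK$ preserves word length, so every $R$ (resp.\ $S$) contributing to a coefficient has $|ev|=|\mu|$. For set-valued tableaux this forces $\varepsilon(S)=|\mu|-|\lambda|$ to be \emph{constant} across each coefficient, so the signs $(-1)^{\varepsilon(S)}$ factor out as a global $(-1)^{|\lambda|+|\mu|}$.

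With the signs pulled out, I would invoke Proposition~\ref{Prop:DualBasis} with the self-dual Schur basis and the dual pair $(\{G_\lambda\},\{g_\lambda\})$ to convert the $g$-expansion of $s_\mu$ into the $G$-to-$s$ expansion. At this point Theorem~\ref{Thm:Lenart} is reduced to two purely enumerative identities,
\begin{align*}
  \#\{R \in RPP(\lambda) : RSK(w(R)) = T_0\} &= f_\lambda^\mu, \\
  \#\{S \in SVT(\lambda) : RSK(w(S)) = T_0\} &= F_\mu^\lambda,
\end{align*}
with $T_0$ a fixed tableau of shape $\mu$: the reverse plane partitions should be counted by elegant fillings of $\lambda/\mu$, and the set-valued tableaux by strict elegant fillings of $\mu/\lambda$. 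The cell counts already match, since such an $R$ has exactly $|\lambda|-|\mu|$ non-circled cells and such an $S$ has exactly $|\mu|-|\lambda|$ extra entries.

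The heart of the argument is to build these two bijections, and here I would reuse the \emph{dilation} and \emph{contraction} operations of Section~\ref{sec:involution} as the engine. Iterating dilation on $S$ repeatedly promotes the top multicell entry into higher rows; by Property~\ref{Prop:dilsvt} and Property~\ref{Prop:dilweight} this terminates in a multicell-free tableau with the same $RSK$ image, namely $T_0$, so the shape grows from $\lambda$ up to $\mu$ and the created cells trace out $\mu/\lambda$. Recording in each created cell the row from which its entry was promoted gives a filling of $\mu/\lambda$; since an entry is promoted from a row strictly below the cell it creates, the recorded value in row $r$ is automatically at most $r-1$, which is exactly the elegant bound. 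Symmetrically, iterating contraction on $R$ (using Property~\ref{Prop:revweight}) shrinks $\lambda$ down to $\mu$, its deleted outer cells tracing $\lambda/\mu$ and recording the analogous statistic as an elegant filling. Because dilation and contraction are mutually reverse --- they are the two halves of the involution $\iota$ --- these recording maps are invertible as soon as they are well defined.

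The main obstacle is precisely this well-definedness together with bijectivity: one must verify the semistandard row and column monotonicities of the recorded entries, the strictly-increasing-across-rows condition on the set-valued side (versus merely weakly increasing on the reverse-plane-partition side), and that every (strict) elegant filling is hit exactly once. This hinges on a careful tracking of the column in which each successive dilation or contraction creates or deletes a cell, and of how the recorded row statistic varies as the canonical ``highest-row, rightmost-cell'' choice advances. I expect the strict/weak dichotomy to reflect exactly the strictly increasing columns of set-valued tableaux against the weakly increasing columns of reverse plane partitions, so that it falls out of the same analysis. Finally, reading the chain of implications backwards shows that Lenart's theorem, together with the two bijections, conversely recovers this instance of Theorem~\ref{Thm:main}, giving the promised ``vice versa.''
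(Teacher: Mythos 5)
Your high-level route is exactly the paper's: apply Theorem~\ref{Thm:main} to the trivial expansion of $s_\mu$, note the sign $(-1)^{\varepsilon(S)}$ is the constant $(-1)^{|\mu|+|\lambda|}$ because Knuth equivalence preserves word length, use Proposition~\ref{Prop:DualBasis} to convert the $g$-expansion of $s_\mu$ into the $G$-to-Schur line, and reduce Theorem~\ref{Thm:Lenart} to the two enumerative identities of Proposition~\ref{Prop:fF}, which are then to be proved by bijections built from iterated dilation and contraction with a recording statistic. That is precisely the paper's Proposition~\ref{Prop:fF} together with the bijections $\phi_T$ and $\psi_T$ of Propositions~\ref{phi} and~\ref{psi}. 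So the issue is not the architecture but the fact that the bijections, which are the entire technical content of this argument, are left at the level of ``I expect,'' and the one concrete design choice you do make is wrong.

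Specifically, on the set-valued side you propose to record, in each cell created by a dilation, \emph{the row from which the promoted entry came}. Test this on the paper's own Example~\ref{Ex:EF-SVTbij}: the four dilations there have source rows $2,1,1,1$ and create the cells $(4,1)$, $(2,3)$, $(3,2)$, $(4,2)$ (row, column), so your recorded filling has row $4$ reading $2,1$ (decreasing) and column $2$ reading $1,1$ (not strictly increasing) --- it is not a skew semistandard filling at all, let alone a strict elegant one. What works, and what $\phi_T$ actually records, is the \emph{difference} between the row index of the created cell and the source row, which in that example gives row $4$ equal to $2,3$ and column $2$ equal to $2,3$. Note the asymmetry you missed by writing ``the analogous statistic'': on the reverse plane partition side the paper's $\psi_T$ \emph{does} record the source row $row(R_i)$, so the two maps use genuinely different statistics. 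Beyond this, two verifications you defer are essential and nontrivial: strictness requires the row-bumping-lemma argument that successive dilations out of the same row terminate in strictly higher rows, and invertibility does not follow from dilation and contraction being ``the two halves of $\iota$'' --- the involution $\iota$ reverses a single coupled step on a pair $(S,R)$ of the same shape, whereas here one must reconstruct an entire sequence of operations from the recorded filling alone, which the paper does by prescribing an explicit processing order (destination rows smallest to largest with ties broken by height on one side; entries smallest to largest, left to right, on the other). As written, your proposal establishes the reduction but not the theorem.
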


Note that the transition between $g$ and Schur functions follows
immediately by duality:
\begin{align} 
  g_\lambda &= \sum_{\mu}^{} f_{\lambda}^{\mu} s_\mu \\
\label{lenthmg}
  s_\mu &= \sum_{\lambda}^{} (-1)^{|\lambda| + |\mu|} 
    F_{\mu}^\lambda g_\lambda\,.
\end{align}

The simplest application of Theorem~\ref{Thm:main} provides a new
combinatorial description for the $f_\mu^\lambda$ and $F_\lambda^\mu$ 
coefficients.  

\begin{Prop}
\label{Prop:fF}
Fix a partition $\lambda$ and a semistandard tableau $T$ of shape $\mu$. 
$F_\mu^{\lambda}$ is the number of set-valued tableaux of shape $\lambda$ 
whose reading word is equivalent to $w(T)$ and
$f_\lambda^{\mu}$ is the number of reverse plane partitions of 
shape $\lambda$ whose reading word is equivalent to $w(T)$.
\end{Prop}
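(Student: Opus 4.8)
The plan is to deduce this proposition directly from Theorem~\ref{Thm:main} by applying it to the most trivial tableaux Schur expansion possible, namely that of a single Schur function. I would start from the observation that the Schur function $s_\mu$ itself has a tableaux Schur expansion in which $\bT(\mu)$ consists of a single tableau $T$ of shape $\mu$ (for instance the superstandard filling, or any fixed representative), with $wt_\mu(T) = 1$. That is, $s_\mu = \sum_{T \in \bT(\mu)} wt_\mu(T)\, s_{sh(T)}$ trivially with one summand of shape $\mu$. Feeding this into \eqref{eq:finG} and \eqref{eq:fing} then produces a $G$-expansion and a $g$-expansion of $s_\mu$ whose coefficients are, respectively, signed counts of reverse plane partitions in $\bR(\mu)$ and signed counts of set-valued tableaux in $\bS(\mu)$.

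Next I would compare these expansions coefficient-by-coefficient with the known transition coefficients recorded in Theorem~\ref{Thm:Lenart}. From \eqref{lenthmG} we have $s_\mu = \sum_\lambda f_\lambda^\mu\, G_\lambda$, and from \eqref{lenthmg} we have $s_\mu = \sum_\lambda (-1)^{|\lambda|+|\mu|} F_\mu^\lambda\, g_\lambda$. Matching the coefficient of $G_\lambda$ against \eqref{eq:finG} shows that $f_\lambda^\mu$ equals the number of reverse plane partitions $R$ of shape $\lambda$ lying in $\bR(\mu)$, i.e. those with $RSK(w(R)) \in \bT(\mu)$; since $\bT(\mu) = \{T\}$ this is exactly the number of $R$ of shape $\lambda$ with $w(R)$ Knuth-equivalent to $w(T)$. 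Similarly, matching the coefficient of $g_\lambda$ against \eqref{eq:fing} identifies $(-1)^{|\lambda|+|\mu|} F_\mu^\lambda$ with $\sum_{S} wt_\mu(S)(-1)^{\varepsilon(S)}$ summed over set-valued tableaux $S \in \bS(\mu)$ of shape $\lambda$. Here I would use that $\varepsilon(S) = |ev(S)| - |shape(S)| = |\mu| - |\lambda|$ is constant across all such $S$ (every $S \in \bS(\mu)$ has $RSK(w(S)) \in \bT(\mu)$, hence the same content $|\mu|$), so the sign pulls out uniformly and the count of these $S$ is precisely $F_\mu^\lambda$.

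The main obstacle I anticipate is the sign bookkeeping in the $g$-expansion, specifically verifying that $\varepsilon(S)$ is indeed constant so that the signed sum collapses to a genuine enumeration. This requires confirming that all set-valued tableaux in $\bS(\mu)$ of a given target shape $\lambda$ carry the same content $\mu$, which follows because $RSK$ preserves content and every element of $\bS(\mu)$ RSK-inserts into the single tableau $T$ of content $\mu$; the shape being fixed at $\lambda$ then forces $\varepsilon(S) = |\mu| - |\lambda|$ and matches the sign $(-1)^{|\lambda|+|\mu|} = (-1)^{|\mu|-|\lambda|}$ appearing in Theorem~\ref{Thm:Lenart}. A secondary point to check is that the stated coefficients are well-defined independently of the chosen representative $T$ of shape $\mu$: any two semistandard tableaux of the same shape and content need not be Knuth-equivalent, but the proposition's phrasing fixes a particular $T$, so I would simply note that the counts are taken relative to the Knuth class of the chosen $T$, which is exactly what $\bR(\mu)$ and $\bS(\mu)$ encode. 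Once these sign and representative issues are settled, the proposition is an immediate translation of Theorem~\ref{Thm:main} via Theorem~\ref{Thm:Lenart}.
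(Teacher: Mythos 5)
Your proposal is correct and takes essentially the same route as the paper: apply Theorem~\ref{Thm:main} to the trivial one-tableau expansion of $s_\mu$, then identify the resulting $G$- and $g$-coefficients with Lenart's transition coefficients via \eqref{lenthmG} and \eqref{lenthmg}. Your extra check that $\varepsilon(S)=|\mu|-|\lambda|$ is constant across $\bS(\mu)$ of fixed shape $\lambda$, so the signed sum is a genuine count matching the sign $(-1)^{|\lambda|+|\mu|}$, is a detail the paper leaves implicit but is the same argument.
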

\begin{proof}
Consider the simple case that $\bT$ consists of just one tableau 
$T$ of shape $\mu$.  We can then 
apply Theorem~\ref{Thm:main} to the trivial expansion
\begin{align} \label{eq:s_in_s}
  s_\mu = \sum_{T \in \bT}^{} s_{sh(T)}
\end{align}
to find that
\begin{align} \label{eq:s_in_g}
  s_\mu\; = \;\sum_{S \in \bS}^{} (-1)^{\varepsilon(S)}\,g_{sh(S)}
\;=\;  \sum_{R \in \bR}^{} G_{sh(R)} \,,
\end{align}
where $\bS$ is the set of all set-valued tableaux whose reading word is Knuth
equivalent to $w(T)$ and $\bR$ is the set of all reverse plane partitions 
whose reading word is Knuth equivalent to $w(T)$.
The result on $F_\mu^\lambda$ then follows by \eqref{lenthmg} and
the interpretation for $f_\lambda^\mu$ follows from \eqref{lenthmG}.
\end{proof}

\subsection{Bijections}

Here we describe bijections between elegant fillings and certain 
reverse plane partitions, and between strict elegant fillings and
certain set-valued tableaux.  The bijections lead to
alternate proofs for Proposition~\ref{Prop:fF}, Theorem~\ref{Thm:Lenart}, 
and Theorem~\ref{Thm:main}.

We first consider a map on strict elegant fillings.  
Recall that $row(S)$ is the highest row of $S$ with a multicell.

\begin{Def}
For any fixed tableau $T$, we define the map
$$
\phi_T:
\left\{
S \in SVT(\mu) : w(S) \sim w(T)
\right\}
\to
\{\text{strict elegant fillings of shape}\; sh(T)/\mu
\}
$$
where $\phi_T(S)$ is the filling of $sh(T)/\mu$ 
that records the sequence of set-valued tableaux 
\[ S = S_0 \to di(S) = S_1 \to di(S_1) = S_2 \to \dots \to S_r = T \]
by putting in cell $sh(S_i) / sh(S_{i-1})$,
the difference between the row index of this cell and $row(S_{i-1})$.
\end{Def}

\begin{Ex} \label{Ex:EF-SVTbij}
  \newcommand{\Ca}{1234} 
  \newcommand{\Cb}{23}  
  \newcommand{\Cc}{123}  
  \newcommand{\Cd}{12}  
Given $\mu=(3,2,1)$ and $T=  {\tiny\young(44,33,222,1111)}$,
  \begin{align*}
    \phi_T\left(\Yboxdim{20pt}\young(4,2\Cb,11\Ca) \right)
=\young(23,\hfil2,\hfil\hfil1,\hfil\hfil\hfil) 
  \end{align*}
is constructed by recording
the sequence of dilations 
  \begin{align*}
    \Yboxdim{20pt}\young(4,2\Cb,11\Ca) &\to&
    \Yboxdim{20pt}\young(4,3,22,11\Ca) &\to&
    \Yboxdim{18pt}\young(4,3,224,11\Cc) &\to&
    \young(4,34,223,11\Cd) &\to&
    \young(44,33,222,1111)\\
    \normalsize
    \young(\hfil,\hfil\hfil,\hfil\hfil\hfil) &&
    \young(2,\hfil,\hfil\hfil,\hfil\hfil\hfil) &&
    \young(2,\hfil,\hfil\hfil1,\hfil\hfil\hfil) &&
    \young(2,\hfil2,\hfil\hfil1,\hfil\hfil\hfil) &&
    \young(23,\hfil2,\hfil\hfil1,\hfil\hfil\hfil) 
  \end{align*}
\end{Ex}

\bigskip

\begin{Prop}
\label{phi}
For any tableau $T$, $\phi_T$ is a bijection.
\end{Prop}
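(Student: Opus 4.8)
The plan is to show that $\phi_T$ is a bijection by exhibiting an explicit two-sided inverse built from the contraction operation, rather than arguing surjectivity and injectivity separately. First I would establish that $\phi_T$ is well-defined: starting from any $S \in SVT(\mu)$ with $w(S) \sim w(T)$, the iterated dilation sequence $S = S_0 \to S_1 \to \cdots$ must terminate at a semistandard tableau, and that tableau must be $T$ itself. Termination follows because each dilation strictly decreases $\varepsilon(S_i)$ (one element is moved out of a multicell into a higher row), so after finitely many steps $\varepsilon(S_r) = 0$ and $S_r$ has no multicells. That the terminal $S_r$ equals $T$ is a consequence of Property~\ref{Prop:dilweight}: dilation preserves the RSK image, so $RSK(w(S_r)) = RSK(w(S)) = RSK(w(T)) = T$, and since $S_r$ is semistandard it equals its own RSK image, giving $S_r = T$. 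I would also check that the recorded values genuinely form a strict elegant filling of shape $sh(T)/\mu$: each new cell $sh(S_i)/sh(S_{i-1})$ lies strictly above $row(S_{i-1})$, so the recorded entry (row index minus $row(S_{i-1})$) is a positive integer, and the elegance and strictness constraints track exactly the geometry of where successive RSK insertions can land.

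The central step is to verify that the natural candidate inverse $\psi_T$ reconstructs $S$ from the filling. Reading the elegant filling allows one to recover the sequence of shapes $sh(S_i)$ and hence, working downward from $T = S_r$, to recover each $S_{i-1}$ from $S_i$ by a single contraction-type move that reverses dilation. Here I would invoke the reversibility already established in the proof of Proposition after Example~\ref{Ex:map}: there it is shown that dilation and contraction are mutually inverse operations on pairs, and the same reverse-RSK-bumping procedure recovers $S_{i-1}$ from $S_i$ together with the data of which cell and which row were involved — precisely the information encoded by the elegant filling entry. The key point is that the elegant filling entry in cell $sh(S_i)/sh(S_{i-1})$ records exactly the offset needed to pin down $row(S_{i-1})$, which in turn determines the cell and letter that the reverse bump must extract. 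Once $row(S_{i-1})$ is known, the reverse bumping is deterministic, so $\psi_T$ is well-defined and $\psi_T \circ \phi_T = \mathrm{id}$.

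The opposite composition $\phi_T \circ \psi_T = \mathrm{id}$ then follows formally from the fact that dilation and contraction invert one another step by step, so I would not repeat the argument in detail; the forward reconstruction of the shape sequence and recorded offsets from a given strict elegant filling matches what $\phi_T$ would produce on the resulting $S$. I expect the main obstacle to be the bookkeeping in the well-definedness of $\psi_T$: one must confirm that, given only a strict elegant filling, each reverse step lands in a valid set-valued tableau and that the recorded offset is consistent with the actual value of $row(S_{i-1})$ computed from the reconstructed $S_{i-1}$ (as opposed to merely the shape). This is where strictness of the elegant filling is essential — it guarantees that distinct cells added in a single ``batch'' of the filling correspond to dilations at strictly increasing target rows, which is exactly what prevents ambiguity in the reverse direction. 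Handling this consistency carefully, using the structural facts about where $x(S)$ can be inserted established in Property~\ref{Prop:dilsvt}, is the crux of the argument.
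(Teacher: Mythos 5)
Your overall strategy is the same as the paper's: run the dilation sequence from $S$ up to $T$, record the row offsets, and invert by reverse bumping, as in case (3) of the definition of $\iota$. Your treatment of termination is correct and in fact more explicit than the paper's (each dilation drops $\varepsilon$ by exactly one, and Property~\ref{Prop:dilweight} together with $RSK(w(S_r))=S_r$ for the multicell-free endpoint forces $S_r=T$). However, the proposal has genuine gaps at precisely the two places where real work is required. First, you never prove that $\phi_T(S)$ is a \emph{strict} elegant filling; the sentence ``the elegance and strictness constraints track exactly the geometry of where successive RSK insertions can land'' restates the claim rather than proving it. Since the starting rows $row(S_0)\ge row(S_1)\ge \cdots$ weakly decrease, strictness fails exactly when two dilations with the same starting row add cells in the same row of $sh(T)/\mu$. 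Ruling this out is the content of the paper's argument: consecutive dilations from the same row bump out strictly decreasing letters (entries within one multicell are distinct, and any multicell further left has all entries below the minimum of the cell just processed), so by the row bumping lemma each successive insertion path runs weakly left of the previous one and its terminal box lands in a strictly higher row. This cannot be skipped: you yourself invoke strictness (``dilations at strictly increasing target rows'') to disambiguate the inverse, so as written the argument is circular --- the inverse leans on a property of the forward map that was never established.

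Second, the inverse is underspecified at its crux, and you concede as much (``Handling this consistency carefully \dots is the crux of the argument''). To recover $S_{i-1}$ from $S_i$ you must determine, from the filling alone, which cell was added last; that is, you must reconstruct the order of the dilations, and nothing in the proposal says how. The paper supplies the missing rule: replace each entry $i$ in row $r$ of the filling by the destination row $r-i$ (which is exactly the starting row $row(S_{i-1})$ of the corresponding dilation), then perform the reverse bumps in order of destination row, smallest first, breaking ties by taking higher cells before lower ones. This order is forced by the structure of the forward map: starting rows weakly decrease along the dilation sequence, while within a fixed starting row the new boxes rise strictly, so the last cell added is the highest cell among those of smallest destination row. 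With that rule stated, each reverse step is the deterministic inverse of the corresponding dilation, one also checks that an \emph{arbitrary} strict elegant filling (not just one known to be in the image) yields a valid set-valued tableau with $w(S)\sim w(T)$, and both compositions are identities. Without it, ``working downward from $T=S_r$'' is not yet a defined map.
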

\begin{proof}
Fix a tableau $T$ and let $\lambda$ denote its shape.
Consider a set-valued tableau $S$ of shape $\mu$ whose reading 
word is equivalent to $w(T)$.  

We start by showing that $F=\phi_T(S)$ is a 
strict elegant filling of shape $\lambda / \mu$.  Let
\[ S = S_0 \to di(S) = S_1 \to di(S_1) = S_2 \to \dots \to S_r = T. \]
Note that this procedure indeed ends with $T$ since each dilation preserves
the Knuth equivalence class of the reading word.  
Since each $sh(S_i)\subset sh(S_{i+1})$, and $row(S_{i})\geq row(S_{i+1})$,
$F$ is an elegant filling of the correct shape by construction.  
To ensure that $F$ is a {\it strict} elegant filling, it is enough to 
know that the bumping paths created by successive dilations starting 
in the same row do not terminate in the same row. 
Since dilation starts with the largest entry in a row, 
such successive dilations involve bumping from row $r$, 
a letter $z$ after an $x$ where $z\leq x$.  Therefore,
the bumping path created by $x$ must be weakly inside the
bumping path of $z$ and in particular, terminates in a higher 
row.

It remains to show that $\phi_T$ is invertible.  
For the inverse map, consider a strict elegant filling of shape $\lambda /
\mu$. In the elegant filling, we first
replace each entry $i$ by $r-i$ where $r$ is the row index of the cell
containing $i$.  The resulting filling consists of ``destination rows'' for the
corresponding entries in our fixed tableau $T$. We proceed by performing 
contraction on the
entries of $T$ which are outside of the inner shape $\mu$, stopping the
reverse-bumping procedure when we get to the destination row.  The order
in which
these contractions are performed is determined first by the destination rows
(smallest to largest) and then by the height of the original cell (highest to
lowest). This concludes the proof.
\end{proof}

Recall that $row(R)$ is the row index of the highest cell in a 
reverse plane partition $R$ which contains 
the same entry as the cell immediately above it.

\begin{Def}
For any fixed tableau $T$, we define the map
$$
\psi_T:
\left\{
R \in RPP(\lambda) : w(R) \sim w(T)
\right\}
\to
\{\text{elegant fillings of shape}\; \lambda/sh(T)
\}
$$
where $\psi_T(R)$ is the filling of $\lambda/sh(T)$ 
that records the sequence of reverse plane partitions
\[ R = R_0 \to co(R) = R_1 \to co(R_1) = R_2 \to \dots \to R_r = T \]
by putting $row(R_i)$ in cell $sh(R_i) / sh(R_{i+1})$.
\end{Def}

\begin{Ex} \label{Ex:EF-RPPbij}
  \begin{align*}
\tiny
    \young(4,33,33,223,112,111) &\to&
\tiny
    \young(4,3,33,223,112,111) &\to&
\tiny
    \young(4,33,223,112,111) &\to&
\tiny
    \young(4,3,233,122,111) &\to&
\tiny
    \young(4,333,222,111) \\
    && 
\tiny
\young(\hfil,\hfil4,\hfil\hfil,\hfil\hfil\hfil,\hfil\hfil\hfil,\hfil\hfil\hfil)
    &&\tiny \young(4,\hfil4,\hfil\hfil,\hfil\hfil\hfil,\hfil\hfil\hfil,\hfil\hfil\hfil)
    &&\tiny \young(4,\hfil4,\hfil1,\hfil\hfil\hfil,\hfil\hfil\hfil,\hfil\hfil\hfil)
    && \tiny\young(4,14,\hfil1,\hfil\hfil\hfil,\hfil\hfil\hfil,\hfil\hfil\hfil)
  \end{align*}
\end{Ex}

\begin{Prop}
\label{psi}
For any semistandard tableau $T$, $\psi_T$ is a bijection.
\end{Prop}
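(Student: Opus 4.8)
The plan is to prove Proposition~\ref{psi} in close parallel with Proposition~\ref{phi}, interchanging the roles of dilation and contraction (equivalently, of set-valued tableaux and reverse plane partitions). Write $\nu = sh(T)$ and fix $R \in RPP(\lambda)$ with $w(R) \sim w(T)$. There are two things to establish: that $\psi_T(R)$ is a genuine elegant filling of shape $\lambda/\nu$, and that $\psi_T$ admits a two-sided inverse.

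For the first, I would begin exactly as in Proposition~\ref{phi}: the contraction sequence $R = R_0 \to R_1 \to \cdots$ deletes one cell at each step and, by Property~\ref{Prop:revweight}, preserves the Knuth class of the reading word. It therefore halts precisely when $row(R_i) = 0$, at which stage $R_i$ has strictly increasing columns, hence is semistandard, so $R_i = RSK(w(R_i)) = RSK(w(T)) = T$. Thus the deleted cells are exactly those of $\lambda/\nu$, giving $\psi_T(R)$ the right shape, and since the cell removed at step $i$ sits in a row $\rho \ge row(R_i)+1$ while carrying the entry $row(R_i) \le \rho-1$, the elegant-filling row bound is automatic.

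The crux is then to see that the resulting array is a skew semistandard tableau, namely weakly increasing along rows and strictly increasing up columns. I would isolate two lemmas. First, a monotonicity statement $row(R_0) \ge row(R_1) \ge \cdots$: setting $i = row(R)$, the rows of $R$ strictly above row $i$ form a skew semistandard tableau (columns are strict there since $i$ is the topmost column-repeat), the contraction slide takes place entirely within this region, and reverse jeu-de-taquin preserves skew semistandardness, so no new column-repeat is created above row $i$. Second, a non-crossing statement: the contractions sharing a common value $i$ of $row$ (which occur consecutively, by monotonicity) have reverse jeu-de-taquin exit paths that do not cross, so their exit cells occupy strictly decreasing columns. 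Granting these, the two tableau conditions follow from the order in which cells leave a Young diagram: within a row cells are removed right to left, so by monotonicity the recorded values weakly increase rightward; within a column cells are removed top to bottom, so the values weakly decrease downward, and the non-crossing lemma upgrades this to strict, since two cells of a common column cannot be removed at equal values of $row$. I expect this path analysis --- the precise analogue of the nested-bumping-path argument used for strictness in Proposition~\ref{phi}, but carried out for jeu-de-taquin slides --- to be the main obstacle.

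Finally, for invertibility I would construct the inverse explicitly, mirroring the inverse in Proposition~\ref{phi}. Given an elegant filling $F$ of $\lambda/\nu$, I read its entries as the successive values of $row$ and undo the contractions one at a time starting from $T$, processing the cells of $F$ in order of increasing entry and, among equal entries, from highest row to lowest. For each cell I perform the inverse of a single contraction: adjoin the cell and run jeu-de-taquin to bring a marker down to the row prescribed by the recorded entry, thereby restoring the column-repeat that contraction had removed. Reversibility of jeu-de-taquin guarantees each step inverts the corresponding contraction, so iterating recovers $R$; that the marker always lands in the prescribed row and yields a valid reverse plane partition is the mirror image of the well-definedness argument above and rests on the same monotonicity and non-crossing lemmas. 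Together these show $\psi_T$ is a bijection, and, as in the set-valued case, they also furnish the claimed alternate routes to Proposition~\ref{Prop:fF}, Theorem~\ref{Thm:Lenart}, and Theorem~\ref{Thm:main}.
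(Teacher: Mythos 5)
Your proposal is correct and takes essentially the same route as the paper's own proof: termination at $T$ via Knuth-class preservation under contraction, the shape and row-bound claims, weak row increase via the monotonicity of $row(R_i)$ (which the paper leaves implicit in ``by construction''), column strictness via non-crossing of the reverse jeu-de-taquin exit paths (which the paper likewise only asserts), and an explicit inverse that re-inserts markers and slides them down to the recorded row, processing entries from smallest to largest. The only cosmetic difference is your tie-breaking rule (highest row first) versus the paper's (left to right); these agree wherever both apply, since equal entries of an elegant filling in higher rows lie strictly further left, though your rule would still need a left-to-right convention for equal entries occurring in the same row.
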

\begin{proof}
Fix a tableau $T$ and denote its shape by $\mu$.
Consider a reverse plane partition $R$ of shape $\lambda$ 
whose reading word is equivalent to that of $T$ and set $F=\psi_T(R)$.
Let
\[ R = R_0 \to co(R) = R_1 \to co(R_1) = R_2 \to \dots \to R_r = T \]
and note that this process does terminate in $T$ since each contraction 
preserves the equivalence class of the reading word.  
By construction, $F$ has weakly increasing rows, is of the proper shape,
and has only entries less than $i$ in row $i$.  To see that the entries of $F$
are strictly increasing up columns, it is enough to note that the reverse
jeu-de-taquin paths of two cells from the same row cannot intersect.  Hence
$F$ is an elegant filling.  To invert this procedure, we repeatedly perform
dilation on $T$, by placing an empty marker in a cell of $\lambda / \mu$, and
sliding this marker into $T$, terminating at the row indicated by the
corresponding entry of $F$. The order these contractions are performed is
determined first by the entries of $F$ (ordered smallest to largest) breaking
ties by proceeding from left to right.
\end{proof}

\begin{Cor}
Theorem~\ref{Thm:Lenart} follows from Theorem~\ref{Thm:main}.
\end{Cor}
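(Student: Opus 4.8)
The plan is to deduce Theorem~\ref{Thm:Lenart} by specializing Theorem~\ref{Thm:main} to a single-tableau expansion and then invoking the bijections $\phi_T$ and $\psi_T$ established in Propositions~\ref{phi} and~\ref{psi} to identify the resulting combinatorial coefficients with Lenart's numbers $f_\lambda^\mu$ and $F_\mu^\lambda$. Concretely, I would first fix a partition $\mu$ and a single semistandard tableau $T$ of shape $\mu$, and apply Theorem~\ref{Thm:main} to the trivial tableaux Schur expansion $s_\mu = \sum_{T \in \bT} s_{sh(T)}$ where $\bT = \{T\}$, exactly as in the proof of Proposition~\ref{Prop:fF}. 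This yields
\begin{align*}
  s_\mu = \sum_{S \in \bS} (-1)^{\varepsilon(S)} g_{sh(S)} = \sum_{R \in \bR} G_{sh(R)}\,,
\end{align*}
where $\bS$ and $\bR$ are respectively the set-valued tableaux and reverse plane partitions whose reading word is Knuth equivalent to $w(T)$.

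The next step is to count these objects by shape using the two bijections. For the $G$-expansion, Proposition~\ref{psi} gives a bijection $\psi_T$ between the reverse plane partitions of shape $\lambda$ in $\bR$ and the elegant fillings of shape $\lambda/\mu$; hence the number of $R \in \bR$ with $sh(R) = \lambda$ is exactly $f_\lambda^\mu$. Substituting into the right-hand identity above recovers $s_\mu = \sum_\lambda f_\lambda^\mu\, G_\lambda$, which is the first equation of Theorem~\ref{Thm:Lenart}. For the $g$-expansion, Proposition~\ref{phi} gives a bijection $\phi_T$ between the set-valued tableaux of shape $\mu'$ (in my notation above, I would relabel carefully) in $\bS$ and the strict elegant fillings of shape $sh(T)/\mu$; tracking the sign $(-1)^{\varepsilon(S)}$ through this bijection identifies it with $(-1)^{|\lambda|+|\mu|}$, so the coefficient of $g_\lambda$ is $(-1)^{|\lambda|+|\mu|} F_\mu^\lambda$, giving equation~\eqref{lenthmg}. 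The remaining two equations of Theorem~\ref{Thm:Lenart} then follow by the duality already recorded in Proposition~\ref{Prop:DualBasis}, or equivalently by reading the displayed transitions in the other direction.

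The step I expect to require the most care is the sign bookkeeping in the $g$-expansion, namely verifying that $(-1)^{\varepsilon(S)} = (-1)^{|\lambda|+|\mu|}$ for every $S \in \bS$ of shape $\lambda$. This is where the bijection $\phi_T$ does real work: each dilation in the chain $S = S_0 \to S_1 \to \cdots \to S_r = T$ moves a single letter out of a multicell and records one box of the strict elegant filling, so $\varepsilon(S)$ equals the total number of recorded boxes, which is $|\lambda| - |\mu| = |sh(T)| - |sh(S)|$. Since $|\lambda|+|\mu|$ and $|\lambda|-|\mu|$ have the same parity, the sign matches, and the weight-preserving nature of dilation (Property~\ref{Prop:dilweight}) guarantees the Knuth class is unchanged throughout. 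Once this parity identity is in hand, the rest is a direct substitution, and no further estimates or constructions are needed beyond the bijections themselves.
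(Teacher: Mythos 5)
Your proposal is correct and follows essentially the same route as the paper: apply Theorem~\ref{Thm:main} to the trivial expansion $s_\mu = s_{sh(T)}$ for a single tableau $T$, then use the bijections $\phi_T$ and $\psi_T$ of Propositions~\ref{phi} and~\ref{psi} to identify the shape-enumerated coefficients with $F_\mu^\lambda$ and $f_\lambda^\mu$, recovering Theorem~\ref{Thm:Lenart} (with the remaining transitions via the duality of Proposition~\ref{Prop:DualBasis}). The paper leaves the sign check implicit, whereas you spell out that $\varepsilon(S) = |sh(T)| - |sh(S)|$ has the same parity as $|sh(T)| + |sh(S)|$; this is a correct and worthwhile elaboration, not a different argument.
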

\begin{proof}
We apply Theorem~\ref{Thm:main} to the trivial expansion
\eqref{eq:s_in_s} to find that
\begin{align} 
  s_\mu\; = \;\sum_{S \in \bS}^{} (-1)^{\varepsilon(S)}\,g_{sh(S)}
\;=\;  \sum_{R \in \bR}^{} G_{sh(R)} \,,
\end{align}
where $\bS$ is the set of all set-valued tableaux whose reading word is Knuth
equivalent to $w(T)$ and $\bR$ is the set of all reverse plane partitions 
whose reading word is Knuth equivalent to $w(T)$.
The results then follow from Propositions~\ref{phi} and \ref{psi}.
\end{proof}
\begin{Cor}
Theorem~\ref{Thm:main} follows from
Theorem~\ref{Thm:Lenart}.
\end{Cor}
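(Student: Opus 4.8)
The plan is to establish the reverse implication of the equivalence already proved in the previous corollary: having shown that Theorem~\ref{Thm:main} implies Lenart's result (Theorem~\ref{Thm:Lenart}), I now want to show that Lenart's transition formulas, together with the bijections $\phi_T$ and $\psi_T$ of Propositions~\ref{phi} and \ref{psi}, are strong enough to recover the full statement of Theorem~\ref{Thm:main}. The key observation is that Theorem~\ref{Thm:main} is, up to notation, nothing more than Lenart's theorem applied term-by-term to each Schur function appearing in a tableaux Schur expansion. So the entire content is a bookkeeping argument that interchanges a sum over $\bT(\alpha)$ with the Schur-to-$G$ (respectively Schur-to-$g$) expansions.

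First I would start from the hypothesis that $f_\alpha = \sum_{T \in \bT(\alpha)} wt_\alpha(T)\, s_{sh(T)}$. For each fixed $T$ of shape $\mu$, I substitute Lenart's expansion of $s_\mu$ into $G$-functions, namely $s_\mu = \sum_{\lambda} f_\lambda^{\mu} G_\lambda$ from \eqref{lenthmG}, and dually $s_\mu = \sum_{\lambda} (-1)^{|\lambda|+|\mu|} F_\mu^\lambda g_\lambda$ from \eqref{lenthmg}. The next step is to reinterpret the coefficients $f_\lambda^\mu$ and $F_\mu^\lambda$ via Proposition~\ref{Prop:fF}: $f_\lambda^\mu$ counts reverse plane partitions of shape $\lambda$ whose reading word is Knuth equivalent to $w(T)$, and $F_\mu^\lambda$ counts set-valued tableaux of shape $\lambda$ with the same property. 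This is exactly where the bijections $\psi_T$ and $\phi_T$ do the work, converting the elegant-filling count into a count of the combinatorial objects appearing in $\bR(\alpha)$ and $\bS(\alpha)$.

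The final step is to collect terms. Summing over $T \in \bT(\alpha)$ with weights $wt_\alpha(T)$, each reverse plane partition $R$ with $RSK(w(R)) = T$ contributes $wt_\alpha(R) = wt_\alpha(T)$ to the coefficient of $G_{sh(R)}$, yielding $f_\alpha = \sum_{R \in \bR(\alpha)} wt_\alpha(R)\, G_{sh(R)}$, which is \eqref{eq:finG}. The dual computation, tracking the sign $(-1)^{\varepsilon(S)} = (-1)^{|\lambda|+|\mu|}$ that matches the sign in Lenart's $g$-expansion, gives $f_\alpha = \sum_{S \in \bS(\alpha)} wt_\alpha(S)\,(-1)^{\varepsilon(S)}\, g_{sh(S)}$, which is \eqref{eq:fing}. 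I would verify that the sign $(-1)^{|\lambda|+|\mu|}$ on a strict elegant filling of shape $\lambda/\mu$ agrees with $(-1)^{\varepsilon(S)}$ under $\phi_T$, since $\varepsilon(S) = |ev(S)| - |sh(S)| = |\lambda'| - |\mu|$ where $\lambda' = sh(T)$ counts the total number of dilations.

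The main obstacle I anticipate is not any deep combinatorics but rather ensuring that the reindexing is genuinely valid for an arbitrary (possibly infinite or weighted) set $\bT(\alpha)$ rather than just the singleton case treated in Proposition~\ref{Prop:fF}; one must check that the bijections $\phi_T$ and $\psi_T$ assemble compatibly across different $T$, i.e.\ that the partition of $\bR(\alpha)$ (respectively $\bS(\alpha)$) according to the Knuth class of the reading word is exactly the fibering induced by $RSK(w(\cdot)) \in \bT(\alpha)$. Since $RSK(w(R))$ is a single well-defined tableau for each $R$, this fibering is automatic, and the sign and weight are constant on each fiber by the definition of $wt_\alpha$ on $\bR(\alpha)$ and $\bS(\alpha)$. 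Thus the proof reduces to applying the singleton case of Propositions~\ref{phi} and \ref{psi} on each Knuth class and summing.
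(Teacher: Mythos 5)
Your proof is correct and takes essentially the same route as the paper: reinterpret the coefficients in Lenart's expansions via the bijections $\phi_T$ and $\psi_T$ of Propositions~\ref{phi} and \ref{psi}, check that the sign $(-1)^{|\lambda|+|\mu|}$ matches $(-1)^{\varepsilon(S)}$, and then extend by linearity over $\bT(\alpha)$, the Knuth-class fibering making the reindexing automatic. One caution: rest the coefficient reinterpretation on Propositions~\ref{phi} and \ref{psi} alone (as your argument in substance does), not on Proposition~\ref{Prop:fF}, since the paper's proof of that proposition invokes Theorem~\ref{Thm:main} and citing it here would make the argument circular.
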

\begin{proof}
Given Theorem~\ref{Thm:Lenart}, we reinterpret the coefficients
using Propositions~\ref{phi} and \ref{psi} to obtain
\eqref{eq:s_in_g}.  Our claim follows by linear extension.
\end{proof}

\section{Applications} \label{sec:applications}

Here we give a direct combinatorial characterization for the
$G$- and $g$-expansions of products of Schur functions and 
certain Macdonald polynomials.  
Theorem~\ref{Thm:main} can be applied to any function $f$ with a 
tableaux Schur expansion to obtain the $G$- and $g$-expansion for $f$.
The expansion coefficients are given by the enumeration of
reverse plane partitions (resp. set-valued tableaux) whose reading word 
is Knuth equivalent to prescribed sets of tableaux.  We use such 
descriptions as a springboard for finding combinatorial interpretations 
for the coefficients that avoid Knuth equivalence.

\subsection{Littlewood-Richardson expansions}
Perhaps the most classical example of a tableaux Schur expansion is 
the Littlewood-Richardson rule for multiplying Schur functions.
The coefficients in
\begin{equation} \label{litric}
  s_\mu\, s_\nu = \sum_{\lambda}
  c_{\mu\nu}^\lambda s_\lambda 
\end{equation}
have beautiful combinatorial descriptions,
count multiplicities of the irreducible $GL(n,\mathbb C)$-module
$V^\lambda$ of highest weight $\lambda$ in the tensor product of
$V^\mu \otimes V^\nu$, and encode the number of points in the
intersection of certain Schubert varieties of the Grassmannian.

To describe the numbers $c_{\mu\nu}^\lambda$ requires that a 
certain {\it Yamanouchi} condition be described on words. 
A word $w = w_1, \dots, w_j$ satisfies the Yamanouchi
condition with respect to the letters $a_1 < a_2 < \dots < a_\ell$ when,
in every rightmost segment of $w$, $w_i w_{i+1} \dots w_j$, the number
of $a_r$'s is greater than or equal to the number of $a_{r+1}$'s for
all $1 \le r < \ell$.  For example, the following words are examples of
Yamanouchi words with respect to the letters $\{3, 4, 5\}$:
\[ 54433 \qquad 45343 \qquad 15432326. \]

The numbers $c_{\mu\nu}^\lambda$ can now be described as the number of
skew semistandard Young tableaux of shape $\lambda / \mu$ which have
evaluation $\nu$ and whose reading word satisfies the Yamanouchi
condition.  This was first stated by Littlewood-Richardson \cite{LR34}
and first proved by Sch\"utzenberger~\cite{S77} and
Thomas~\cite{T74,T77}. Note that Theorem~\ref{Thm:main} cannot be
applied directly to (\ref{litric}), since $c_{\mu\nu}^\lambda$ counts
tableaux of shape $\lambda / \mu$, not shape $\lambda$. However, in
this case, this is easily remedied.

We begin by defining $\bT(\mu, \nu)$ as the set of tableaux
with evaluation $(\mu_1, \dots, \mu_\ell,$ $\nu_1, \dots,
\nu_k)$ whose reading words satisfy the Yamanouchi 
condition with respect to the letters $1, \dots, |\mu|$ and
(separately) with respect to the letters $|\mu| + 1, \dots, |\mu| +
|\nu|$.

\begin{Lem} \label{Lem:LRRule}
  We have the tableaux Schur expansion
  \begin{equation}
    s_\mu s_\nu = \sum_{T \in \bT(\mu,\nu)}^{} s_{sh(T)} \,.
  \end{equation}
\end{Lem}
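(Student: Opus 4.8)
The plan is to exhibit an explicit weight-preserving, shape-preserving bijection between the pair $(T_\mu, T_\nu)$ of semistandard tableaux (one of shape $\mu$, one of shape $\nu$) appearing in the Littlewood-Richardson product and the tableaux $T \in \bT(\mu,\nu)$ of the stated evaluation. The underlying principle is that multiplying Schur functions corresponds, on the tableaux side, to a concatenation-and-rectification (equivalently, RSK-product) operation, and the Yamanouchi condition on two disjoint alphabets is exactly what records the ``factorization'' of a tableau of this special evaluation back into the two factors.

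Concretely, first I would observe that by Definition~\ref{Def:Schur}, the left-hand side $s_\mu s_\nu$ is the generating function $\sum \bx^{ev(T_\mu)} \bx^{ev(T_\nu)}$ over all pairs $(T_\mu, T_\nu) \in SST(\mu) \times SST(\nu)$. The goal is to reindex this double sum as a single sum $\sum_{T \in \bT(\mu,\nu)} s_{sh(T)}$. The natural bijection sends a pair $(T_\mu, T_\nu)$ to the tableau $T$ of evaluation $(\mu_1,\dots,\mu_\ell,\nu_1,\dots,\nu_k)$ built by first placing, in a skew strip, the highest-weight (superstandard) filling of shape $\mu$ on the alphabet $\{1,\dots,|\mu|\}$ recording $T_\mu$, then appending a skew tableau on the alphabet $\{|\mu|+1,\dots,|\mu|+|\nu|\}$ whose Yamanouchi reading word encodes $T_\nu$ via the classical Littlewood-Richardson correspondence; rectifying the result yields $T$. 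The two separate Yamanouchi conditions on the disjoint alphabets $1,\dots,|\mu|$ and $|\mu|+1,\dots,|\mu|+|\nu|$ are precisely the constraints guaranteeing that this encoding is reversible.

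The cleanest route, however, is probably to avoid re-deriving the full Littlewood–Richardson rule and instead reduce directly to the cited classical statement. I would write $s_\mu s_\nu = \sum_\lambda c_{\mu\nu}^\lambda s_\lambda$ and recall that $c_{\mu\nu}^\lambda$ counts skew tableaux of shape $\lambda/\mu$, evaluation $\nu$, with Yamanouchi reading word. The task then reduces to showing that for each $\lambda$, the number of $T \in \bT(\mu,\nu)$ with $sh(T) = \lambda$ equals $c_{\mu\nu}^\lambda$. I would establish this by a shape-preserving bijection between such $T$ and the Littlewood–Richardson skew tableaux: given $T$ of shape $\lambda$ and evaluation $(\mu_1,\dots,\mu_\ell,\nu_1,\dots,\nu_k)$ whose word is Yamanouchi on both alphabets, the cells containing entries $\le |\mu|$ form (after using the first Yamanouchi condition) a copy of the unique highest-weight tableau of shape $\mu$, and the complementary cells, carrying entries $> |\mu|$, yield a skew tableau of shape $\lambda/\mu$ and evaluation $\nu$ whose word is Yamanouchi on the second alphabet — exactly a Littlewood–Richardson filling.

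The main obstacle I anticipate is verifying that the first Yamanouchi condition genuinely forces the subtableau on entries $1,\dots,|\mu|$ to be the canonical highest-weight tableau of shape $\mu$ (so that it occupies precisely the cells of $\mu$ inside $\lambda$ and does not interfere with the skew part), and correspondingly that this decomposition is a clean bijection rather than merely an injection or a counting coincidence. This hinges on the standard fact that a semistandard tableau whose reading word is Yamanouchi must be the superstandard filling with $i$'s in row $i$; once that is in hand, the separation of $T$ into an inner straight-shape piece and an outer skew piece is forced, and the equality of the two counts — hence the claimed tableaux Schur expansion — follows.
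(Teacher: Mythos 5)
Your ``cleanest route'' is essentially the paper's own proof: both start from the classical Littlewood--Richardson rule and then give a shape-preserving bijection between LR skew tableaux of shape $\lambda/\mu$ and evaluation $\nu$ and the tableaux in $\bT(\mu,\nu)$ of shape $\lambda$, hinging on the fact that the Yamanouchi condition on the first alphabet forces the small-letter subtableau to be the unique tableau of shape and evaluation $\mu$. The paper, like you, leaves that forcing fact to the reader (``it is not hard to see''), so your proposal matches it in both structure and level of detail.
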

\begin{proof}
  It is not hard to see that any tableau whose reading word satisfies the
  Yamanouchi condition with respect to the letters $1, \dots, |\mu|$ must
  contain as a subtableau the unique tableau of shape and evaluation
  $\mu$.  Hence there is a bijection between the tableaux counted by
  $c_{\mu\nu}^{\lambda}$ and those tableaux in $\bT(\mu,\nu)$ of shape
  $\lambda$, given by adding $|\mu|$ to every letter and filling in
  the inner shape $\mu$ with the unique tableau of shape and evaluation
  $\mu$.
\end{proof}

We can apply Theorem~\ref{Thm:main} to this tableaux Schur expansion 
to get the $G$ and $g$-expansion of $s_\mu s_\nu$ as a sum over
elements of $\bS(\mu,\nu)$ and $\bR(\mu,\nu)$.  In fact, the 
elements of $\bS(\mu,\nu)$ and $\bR(\mu,\nu)$ can be more directly
characterized in terms of the Yamanouchi condition.  

\begin{Lem}
Consider $\bT(\mu,\nu)$ as defined above.
A set-valued tableau $S\in \bS(\mu, \nu)$
(resp. $R\in \bR(\mu,\nu)$) if and only if
$S$ (resp. $R$) has evaluation $(\mu_1, \dots, \mu_\ell, \nu_1, \dots,
\nu_k)$ and its reading word satisfies the Yamanouchi 
condition with respect to the letters $1, \dots, |\mu|$ and
(separately) with respect to the letters $|\mu| + 1, \dots, |\mu| +
|\nu|$.  
\end{Lem}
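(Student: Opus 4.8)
The plan is to prove both directions by exploiting the definition of $\bS(\mu,\nu)$ and $\bR(\mu,\nu)$ as those set-valued tableaux (resp.\ reverse plane partitions) whose reading word is Knuth equivalent to $w(T)$ for some $T \in \bT(\mu,\nu)$, and then translating the Knuth-equivalence condition into the combinatorially explicit Yamanouchi condition. The essential fact I would invoke is that the Yamanouchi condition on a word is a Knuth-equivalence invariant: if $w(X) \sim w(T)$ then $w(X)$ is Yamanouchi with respect to a given set of consecutive letters if and only if $w(T)$ is (since RSK-insertion preserves Knuth class, and the Yamanouchi property can be read off the recording of the insertion tableau). Likewise the evaluation is determined by the multiset of letters, and I must check that the notions of evaluation for set-valued tableaux and reverse plane partitions used in the reading word are compatible with the evaluation of the underlying Knuth class.

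First I would establish the forward direction. Suppose $S \in \bS(\mu,\nu)$, so $RSK(w(S)) = T$ for some $T \in \bT(\mu,\nu)$. By definition $w(S) \sim w(T)$, and since $T$ has reading word satisfying the Yamanouchi condition with respect to $1,\dots,|\mu|$ and to $|\mu|+1,\dots,|\mu|+|\nu|$, the Knuth-invariance of the Yamanouchi property transfers this to $w(S)$. The evaluation claim follows because $w(S)$ and $w(T)$ have the same content as words, and the reading word of a set-valued tableau (Definition~\ref{Def:SVTReading}) lists every element of $S$ exactly once, so the word content equals $ev(S)$. The identical argument handles $R \in \bR(\mu,\nu)$, once I note that the reading word of a reverse plane partition lists each \emph{circled} entry once and that this word content matches the evaluation of $R$ in the Lam--Pylyavskyy sense (the per-column counting convention is exactly what guarantees each column value is read once).

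For the converse, suppose $S$ is a set-valued tableau with evaluation $(\mu_1,\dots,\mu_\ell,\nu_1,\dots,\nu_k)$ whose reading word satisfies the two Yamanouchi conditions. I would set $T = RSK(w(S))$ and show $T \in \bT(\mu,\nu)$. Since $w(T) = w(RSK(w(S)))$ is Knuth equivalent to $w(S)$, the Yamanouchi conditions again transfer by invariance, and the evaluation of $T$ equals the word content of $w(S)$, which is the prescribed composition. Thus $T \in \bT(\mu,\nu)$ and $S \in \bS(\mu,\nu)$ by definition; the reverse plane partition case is symmetric.

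The main obstacle I anticipate is rigorously justifying that the Yamanouchi condition is a genuine Knuth-class invariant and, more delicately, that the word-content/evaluation bookkeeping is consistent across the three different evaluation conventions (semistandard tableaux, set-valued tableaux, and the column-based convention for reverse plane partitions). The Yamanouchi invariance is standard once phrased in terms of the shape of the insertion tableau restricted to the relevant letters, so I would cite the classical theory of jeu-de-taquin and Knuth equivalence; the content-matching is where the carefully chosen reading words of Definition~\ref{Def:SVTReading} and Definition~\ref{Def:PPReading} do the real work, and I would verify it by directly comparing the list of letters produced by each reading word against the respective evaluation definitions.
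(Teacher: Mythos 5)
Your proposal is correct and takes essentially the same approach as the paper's own proof: both rest on the defining property that elements of $\bS(\mu,\nu)$ and $\bR(\mu,\nu)$ have reading words Knuth equivalent to reading words of elements of $\bT(\mu,\nu)$, together with the fact that the Yamanouchi condition (and the word content) is invariant under Knuth equivalence. The paper's proof is simply terser---it cites this invariance and leaves the evaluation bookkeeping implicit---whereas you spell out the compatibility of the three evaluation conventions with the reading words, which is a worthwhile elaboration but not a different argument.
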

\begin{proof}
By definition, the reading word of $S\in\bS(\mu,\nu)$ (resp.
$R\in\bR(\mu,\nu)$) is Knuth equivalent to an  element of
$\bT(\mu,\nu)$.  The result then follows from the fundamental property
that the Yamanouchi property is preserved under Knuth
equivalence \cite{Ful}.  
\end{proof}

\begin{Cor}
\label{Cor:LRRule}
For partitions $\mu = (\mu_1, \dots, \mu_\ell)$ and
$\nu = (\nu_1, \dots, \nu_k)$, 
 \begin{align*}
  s_\mu s_\nu  &= \sum_{S \in S(\mu, \nu)}^{} (-1)^{\varepsilon(S)}
  g_{sh(S)} \\
  &= \sum_{R \in R(\mu, \nu)}^{} G_{sh(R)}\,.
\end{align*}
where $S(\mu, \nu)$ (resp. $R(\mu,\nu)$) is the set of 
set-valued tableaux (reverse plane partitions)
with evaluation $(\mu_1, \dots, \mu_\ell, \nu_1, \dots,
\nu_k)$ whose reading words satisfy the Yamanouchi 
condition with respect to the letters $1, \dots, |\mu|$ and
(separately) with respect to the letters $|\mu| + 1, \dots, |\mu| +
|\nu|$.  
\end{Cor}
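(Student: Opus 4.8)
The plan is to obtain this statement as a direct application of Theorem~\ref{Thm:main} to the tableaux Schur expansion established in Lemma~\ref{Lem:LRRule}, followed by the explicit identification of the associated sets of set-valued tableaux and reverse plane partitions provided by the preceding Lemma.

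First I would observe that the expansion $s_\mu s_\nu = \sum_{T \in \bT(\mu,\nu)} s_{sh(T)}$ from Lemma~\ref{Lem:LRRule} is precisely of the form \eqref{eq:fins}, with index $\alpha = (\mu,\nu)$ and constant weight function $wt_\alpha \equiv 1$. Because the extension of $wt_\alpha$ to set-valued tableaux and reverse plane partitions is defined by $wt_\alpha(X) = wt_\alpha(RSK(w(X)))$, this extended weight remains identically $1$ on all of $\bS(\mu,\nu)$ and $\bR(\mu,\nu)$. Applying Theorem~\ref{Thm:main} therefore yields
\begin{align*}
  s_\mu s_\nu = \sum_{R \in \bR(\mu,\nu)} G_{sh(R)} = \sum_{S \in \bS(\mu,\nu)} (-1)^{\varepsilon(S)} g_{sh(S)}.
\end{align*}

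It then remains only to replace the sets $\bS(\mu,\nu)$ and $\bR(\mu,\nu)$, which are defined through Knuth equivalence of reading words to elements of $\bT(\mu,\nu)$, by the explicitly described sets $S(\mu,\nu)$ and $R(\mu,\nu)$ appearing in the statement. This is exactly the content of the Lemma immediately preceding this corollary, which characterizes membership in $\bS(\mu,\nu)$ (resp. $\bR(\mu,\nu)$) as having evaluation $(\mu_1,\dots,\mu_\ell,\nu_1,\dots,\nu_k)$ together with the Yamanouchi condition with respect to $1,\dots,|\mu|$ and to $|\mu|+1,\dots,|\mu|+|\nu|$ separately. Substituting $S(\mu,\nu) = \bS(\mu,\nu)$ and $R(\mu,\nu) = \bR(\mu,\nu)$ into the display above gives the claim.

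Since every ingredient has already been proved, I do not anticipate a genuine obstacle here; this corollary is assembled rather than argued. The only point requiring a moment of care is confirming that the constant weight $1$ passes unchanged through the extension to $\bS(\mu,\nu)$ and $\bR(\mu,\nu)$, so that the two displayed sums carry no nontrivial coefficients; this is immediate from the definition $wt_\alpha(X) = wt_\alpha(RSK(w(X)))$ and the fact that $wt_\alpha$ is identically $1$ on $\bT(\mu,\nu)$.
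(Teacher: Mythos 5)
Your proposal is correct and follows exactly the route the paper itself takes: the corollary is obtained by applying Theorem~\ref{Thm:main} to the tableaux Schur expansion of Lemma~\ref{Lem:LRRule} (with weight identically $1$), and then replacing $\bS(\mu,\nu)$ and $\bR(\mu,\nu)$ by their Yamanouchi characterization from the lemma immediately preceding the corollary. Your added remark that the constant weight passes unchanged through the extension $wt_\alpha(X) = wt_\alpha(RSK(w(X)))$ is the only verification needed, and you handle it correctly.
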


\subsection{Hall-Littlewood symmetric functions}

An acclaimed family of functions with a tableaux Schur expansion 
is the Hall-Littlewood basis $\{H_\lambda[X;t]\}_\lambda$. These 
are a basis for $\Lambda$ over the polynomial ring $\bZ[t]$ 
that reduces to the homogeneous basis when the parameter $t$ is set 
to 1. These often are denoted by $\{Q'_\lambda[X;t]\}$ in the
literature (\cite{MacSF}). Hall-Littlewood polynomials arise and can
be defined in various contexts such as the Hall Algebra, the character
theory of finite linear groups, projective and modular representations
of symmetric groups, and algebraic geometry.  We define them here via
a tableaux Schur expansion due to Lascoux and Sch\"utzenberger
\cite{LS78}.

The key notion is the \emph{charge} statistic on semistandard
tableaux. This is given by defining charge on words and then defining
the charge of a tableau to be the charge of its reading word.  For our
purposes, it is sufficient to define charge only on words whose
evaluation is a partition.  
We begin by defining the charge of a word with weight $(1,1,\dots,1)$,
or a \emph{permutation}.  If $w$ is a permutation of length $n$, then
the charge of $w$ is given by $\sum_{i=1}^{n} c_i(w)$ where $c_1(w) =
0$ and $c_i(w)$ is defined recursively as
\begin{align*}
  c_i(w) &= c_{i-1}(w) + \chi\left( \text{$i$ appears to the right of
  $i-1$ in $w$} \right).
\end{align*}
Here we have used the notation that when $P$ is a proposition, $\chi(P)$ is
equal to $1$ if $P$ is true and $0$ if $P$ is false.
\begin{Ex} \label{Ex:ch1}
  $ch(3,5,1,4,2) = 0 + 1 + 1 + 2 + 2 = 6.$
\end{Ex}

We will now describe the decomposition of a word with partition
evaluation into \emph{charge subwords}, each of which are permutations.
The charge of a word will then be defined as the sum of the charge of
its charge subwords.  To find the first charge subword $w^{(1)}$ of a
word $w$, we begin at the \emph{right} of $w$ (\ie at the last element
of $w$) and move leftward through the word, marking the first $1$ that
we see.  After marking a $1$, we continue to travel to the left, now
marking the first $2$ that we see.  If we reach the beginning of the
word, we loop back to the end.  We continue in this manner, marking
successively larger elements, until we have marked the largest letter
in $w$, at which point we stop.  The subword of $w$ consisting of the
marked elements (with relative order preserved) is the first charge
subword.  We then remove the marked elements from $w$ to obtain a word
$w'$.  The process continues iteratively, with the second charge
subword being the first charge subword of $w'$, and so on.
\begin{Ex} \label{Ex:ch2}
Given $w =(5,2,3,4,4,1,1,1,2,2,3)$, the first charge subword of $w$ 
are the bold elements in
  $(\mathbf{5}, \mathbf{2}, 3, 4, \mathbf{4}, 1, 1, \mathbf{1}, 2, 2,
  \mathbf{3})$.  If we remove the bold letters, the second
  charge subword is given by the bold elements in
$(\mathbf{3}, \mathbf{4}, 1, \mathbf{1}, 2, \mathbf{2})$.  
It is now easy to see that the third and final
  charge subword is $(\mathbf{1}, \mathbf{2})$.  Thus we have the
  following computation of the charge of $w$:
  \begin{align*}
    ch(w) &= ch(5,2,4,1,3) + ch(3,4,1,2) + ch(1,2) \\
    &= (0 + 0 + 1 + 1 + 1) + (0 + 1 + 1 + 2) + (0 + 1) \\
    &= 8
  \end{align*}
Since $w$ is the reading word of the tableau
    \[T= \tiny\young(5,2344,111223)\,,\]
we find that the $ch(T)=8$.
\end{Ex}

An important property of the charge function is that it is compatible
with RSK:
\begin{Property} \cite{LS78} \label{Prop:RSKcharge}
  For all words $w$, $ch(w) = ch(RSK(w))$.
\end{Property}

\begin{Def} \label{Def:HL}
  The Hall-Littlewood polynomial $H_\mu[X;t]$ is defined by
  \begin{align} \label{eq:Htos}
    H_\mu[X;t] =\sum_{T \in \bT(\mu)}^{} t^{ch(T)} s_{sh(T)}
  \end{align}
  where $\bT(\mu)$ is the set of all tableaux of evaluation
  $\mu$.
\end{Def}
Note that when $t=1$, Definition~\ref{Def:HL} reduces to
equation~(\ref{eq:htos}) implying that $H_\mu[X;1]=h_\mu$.

In the same spirit that the charge of a semi-standard tableau
is given by the charge of its reading word, we define the charge 
of a set-valued tableau and of a reverse plane partition by the 
taking the charge of their reading words.  
\begin{Ex}
Since the words of the set valued tableau
  \newcommand{\Ca}{3,4} 
  \newcommand{\Cb}{2,3}  
  \newcommand{\Cc}{4,5}  
  \[ 
  \Yboxdim{16pt}\young(23\Cc,11122\Ca)\normalsize \;. \] 
and the reverse plane partition
\[ \small\young(53,23,1344,111223) \]
are both given by $w =(5,2,3,4,4,1,1,1,2,2,3)$, the charge of both of these
objects is $8$.
\end{Ex}

\begin{Cor} \label{Cor:hallinG}
The Hall-Littlewood functions can be written in terms of $G$-functions as
  \begin{align} \label{eq:thm}
    H_\mu[X;t] &= 
\sum_{\lambda}\sum_{R\in RPP(\lambda,\mu)}
t^{ch(R)}\, G_{\lambda}\;,
  \end{align}
and in terms of $g$-functions as
  \begin{align} \label{eq:thm}
     H_\mu[X;t] &= \sum_\lambda
(-1)^{|\mu| - |\lambda|} 
\sum_{S\in SVT(\lambda,\mu)}
     t^{ch(S)}\, g_{\lambda} \;.
 \end{align}
\end{Cor}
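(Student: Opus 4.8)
The plan is to recognize Corollary~\ref{Cor:hallinG} as the direct specialization of the master formula, Theorem~\ref{Thm:main}, to the Hall-Littlewood tableaux Schur expansion given in Definition~\ref{Def:HL}. Concretely, I would take $f_\mu = H_\mu[X;t]$, with $\bT(\mu)$ the set of all semistandard tableaux of evaluation $\mu$ and weight function $wt_\mu(T) = t^{ch(T)}$, so that \eqref{eq:Htos} is precisely an instance of \eqref{eq:fins}. Applying Theorem~\ref{Thm:main} then yields $H_\mu[X;t] = \sum_{R \in \bR(\mu)} wt_\mu(R)\, G_{sh(R)}$ and $H_\mu[X;t] = \sum_{S \in \bS(\mu)} wt_\mu(S)\,(-1)^{\varepsilon(S)} g_{sh(S)}$, where by definition $wt_\mu(X) = t^{ch(RSK(w(X)))}$ and membership in $\bR(\mu)$ or $\bS(\mu)$ is governed by $RSK(w(X)) \in \bT(\mu)$.

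The work is therefore entirely in simplifying these index sets and weights into the clean forms stated in the corollary. First I would invoke Property~\ref{Prop:RSKcharge}, which gives $ch(RSK(w(X))) = ch(w(X))$; combined with the convention already introduced in the excerpt that the charge of a reverse plane partition or set-valued tableau is the charge of its reading word, this shows $wt_\mu(R) = t^{ch(R)}$ and $wt_\mu(S) = t^{ch(S)}$ directly, with no reference to the underlying tableau. Next I would identify the membership conditions. The key observation is that a word $w$ is Knuth equivalent to the reading word of some tableau of evaluation $\mu$ if and only if $ev(w) = \mu$, since RSK preserves the multiset of letters and every tableau evaluation is realized. Thus $R \in \bR(\mu)$ iff $ev(R) = \mu$ and $S \in \bS(\mu)$ iff $ev(S) = \mu$. (This is exactly the simplification made for $h_\mu$ in the commented-out section of the excerpt, now carrying the charge weight along.) Regrouping the sums by the shape $\lambda = sh(X)$ then converts $\sum_{R \in \bR(\mu)}$ into $\sum_\lambda \sum_{R \in RPP(\lambda,\mu)}$ and likewise for $S$, producing exactly \eqref{eq:thm}.

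The one point demanding genuine care, and which I expect to be the main obstacle, is reconciling the notion of \emph{evaluation} for reverse plane partitions with the evaluation condition coming from the reading word. Recall that the excerpt defines the evaluation of a reverse plane partition $R$ via the number of \emph{columns} containing each letter, whereas the reading word $w(R)$ records only the bottommost occurrence of each letter in each column; one must verify that $ev(w(R))$ agrees with the column-evaluation $ev(R)$ so that the rewriting $wt_\mu(R) = t^{ch(w(R))}$ and the condition $ev(R) = \mu$ are mutually consistent and match $RPP(\lambda,\mu)$ as defined. This is a bookkeeping check: the circling procedure in Definition~\ref{Def:PPReading} selects exactly one representative of each letter per column it occupies in, so the multiset of circled entries has the same per-letter counts as the column-evaluation. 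For set-valued tableaux the analogous check is that $ev(w(S)) = ev(S)$, which is immediate since $w(S)$ lists every entry of $S$ exactly once. Once these identifications are verified, the sign $(-1)^{|\mu|-|\lambda|}$ in the $g$-expansion is just $(-1)^{\varepsilon(S)} = (-1)^{|ev(S)|-|shape(S)|} = (-1)^{|\mu|-|\lambda|}$, pulled out of the inner sum as a constant depending only on $\lambda$, completing the proof.
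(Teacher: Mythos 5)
Your proposal is correct and takes essentially the same route as the paper's proof: apply Theorem~\ref{Thm:main} to the expansion \eqref{eq:Htos} with $wt_\mu(T)=t^{ch(T)}$, use Property~\ref{Prop:RSKcharge} to replace $ch(RSK(w(X)))$ by $ch(w(X))$, and identify $\bR(\mu)$ and $\bS(\mu)$ as the reverse plane partitions and set-valued tableaux of evaluation $\mu$. The only difference is one of detail: you verify explicitly the bookkeeping that the paper asserts without comment, namely that $ev(w(R))$ agrees with the column-evaluation $ev(R)$, that Knuth equivalence preserves content so membership in $\bR(\mu)$ or $\bS(\mu)$ reduces to the evaluation condition, and that $(-1)^{\varepsilon(S)}=(-1)^{|\mu|-|\lambda|}$.
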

\begin{proof}
The tableaux Schur expansion
\eqref{eq:Htos} for Hall-Littlewood polynomials becomes
\begin{align}
H_\mu[X;t] 
&=
\sum_{R \in \bR(\mu)}^{} t^{ch(RSK(w(R))} G_{sh(R)}\\
&=
    \sum_{S \in \bS(\mu)}^{} t^{ch(RSK(w(S))} 
    (-1)^{\varepsilon(S)} g_{sh(S)}
\end{align}
by Theorem~\ref{Thm:main}. By Property~\ref{Prop:RSKcharge}, we have
that $ch(RSK(w)) = ch(w)$.  Moreover, the set $\bR(\mu)$ (resp.
$\bS(\mu)$) is none other than reverse plane partitions (resp.
set-valued tableaux) of evaluation $\mu$. 
\end{proof}

\subsection{Macdonald polynomials}
The generalization of the Hall-Littlewood polynomials to a two
parameter family of symmetric functions gives a hotly studied 
problem in the area of tableaux Schur expansions.  Ideas impacting
theories ranging from Hilbert schemes of points in the plane to
quantum cohomology 
have come forth
from the study of the Schur expansion coefficients
$K_{\lambda\mu}(q,t)$ in Macdonald polynomials: 
\begin{equation} \label{macex}
  H_\mu[X;q,t] = \sum_\lambda K_{\lambda\mu}(q,t) s_\lambda \,.
\end{equation}
Major progress in the combinatorial study of these coefficients 
during the last decade establishes that
\begin{equation}
   H_\mu[X;q,t] = \sum_{T \in \bT(1^n)} wt_\mu(T)\, s_{sh(T)}
\,,
\end{equation}
where $\bT(1^n)$ is the set of standard tableaux and $wt_\mu$ is 
an unknown statistic associating some $t$ and $q$ power to each 
standard tableaux~\cite{H01, A07}.  

The ongoing search for $wt_\mu$ has led to many exciting theories 
and a natural solution has been found in special cases.  When $q=0$, 
$H_\mu[X;0,t]$ are the Hall-Littlewood polynomials, whose 
expansion was just described in the previous section in terms of
charge.  In addition to the Hall-Littlewood case, there is a
tableaux Schur expansion when $q=1$.  Again, we not only apply our 
theorem to obtain the $G$- and $g$-expansions, 
but we can go further to provide a direct combinatorial
interpretation for the expansion coefficients.

The formula for expanding $H_\lambda[X;1,t]$ into Schur functions 
was given by Macdonald \cite{MacSF} in terms of a refined 
charge statistic.  In particular, for any standard tableau $T$,
he defines
$$
  ch_\mu(T)=\sum_{i \geq 1} ch(\rho_i T)\,,
$$
where $\rho_i T$ is the restriction of $T$ to the $ith$ segment of $[1,n]$
of length $\mu_i'$ (with each entry decremented by $\sum_{j<i}\mu_j'$). 
\begin{Ex}
Let $\mu=(3,3,2,1)$ and
    \[T= \young(9,4568,1237)\,.\]
Note that $\mu'=(4,3,2)$ and thus $ch(\rho_1T)=ch(4123)=5$,
$ch(\rho_2T)=ch(567)=ch(123)=3$,
$ch(\rho_3T)=ch(98)=ch(21)=0$. Hence 
$ch_{(3,3,2,1)}(T) = 5+3+0 = 8$.
\end{Ex}

Macdonald then proves that 
   \[H_\mu[X;1,t] = \sum_\lambda
\sum_{T \in SST(\lambda,1^n)}
t^{ch_\mu(T)}\, s_\lambda \;.\]

Extending the definition of $ch_\mu$ to set-valued tableaux and 
reverse plane partitions in the same way (by restricting and decrementing
the entries of the reading word), Theorem~\ref{Thm:main} gives the
$G$- and $g$-expansions of the $H_\mu[X;1,t]$.
\begin{Cor} \label{Cor:Macq=1}
  The Macdonald polynomials when $q=1$ satisfy the expansions
  \begin{align} \label{eq:thm}
     H_\mu[X;1,t] &= \sum_{\lambda}^{} (-1)^{|\mu| - |\lambda|}
     \sum_{S \in SVT(\lambda,1^n)} t^{ch_\mu(S)} \,g_\lambda \;\\
      &= \sum_{\lambda}^{} 
     \sum_{R \in RPP(\lambda,1^n)} t^{ch_\mu(R)} \,G_\lambda \;.
  \end{align}
\end{Cor}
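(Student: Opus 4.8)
Corollary (Macdonald $q=1$ expansions in $K$-theoretic bases).

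Let me sketch the proof plan for this final corollary.

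The plan is to reduce everything to the tableaux Schur expansion already supplied by Macdonald, namely
\[
H_\mu[X;1,t] = \sum_{T \in SST(\lambda,1^n)} t^{ch_\mu(T)}\, s_{sh(T)}\,,
\]
and then apply Theorem~\ref{Thm:main} verbatim. The right viewpoint is to take $\bT$ to be the set of all \emph{standard} tableaux (those of evaluation $1^n$) and to set $wt(T) = t^{ch_\mu(T)}$. With this choice, the hypothesis \eqref{eq:fins} of Theorem~\ref{Thm:main} is exactly Macdonald's formula, so the theorem immediately yields the $G$- and $g$-expansions indexed by $\bR(\mu)$ and $\bS(\mu)$, with the weight of each object $X$ given by $wt(RSK(w(X))) = t^{ch_\mu(RSK(w(X)))}$.

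The next step is to rewrite these intrinsic weights in the clean form stated in the corollary. First I would invoke the extension of $ch_\mu$ to set-valued tableaux and reverse plane partitions defined just above the statement: since $ch_\mu$ is built by restricting and decrementing the reading word and then applying ordinary charge, and since ordinary charge is Knuth-invariant by Property~\ref{Prop:RSKcharge}, the composite $ch_\mu$ is also Knuth-invariant; hence $ch_\mu(X) = ch_\mu(RSK(w(X)))$ for any $X$. This lets me replace the weight $wt(RSK(w(X)))$ simply by $t^{ch_\mu(X)}$. Second, I would identify the index sets $\bR(\mu)$ and $\bS(\mu)$ concretely: because $\bT$ consists of all standard tableaux, a reverse plane partition $R$ (resp.\ set-valued tableau $S$) lies in $\bR(\mu)$ (resp.\ $\bS(\mu)$) iff $RSK(w(X))$ is standard, i.e.\ iff $X$ has evaluation $1^n$. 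Thus $\bR(\mu) = \bigsqcup_\lambda RPP(\lambda,1^n)$ and $\bS(\mu) = \bigsqcup_\lambda SVT(\lambda,1^n)$, which is precisely the indexing in the two displayed sums. The sign $(-1)^{\varepsilon(S)}$ coming from \eqref{eq:fing} is rewritten as $(-1)^{|\mu|-|\lambda|}$ using $\varepsilon(S) = |ev(S)| - |shape(S)|$ together with $|ev(S)| = n = |\mu|$ and $|shape(S)| = |\lambda|$.

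The main obstacle to watch is the Knuth-invariance of the refined statistic $ch_\mu$, rather than of plain charge. One must check that restricting the reading word to the $i$-th segment $[\,1+\sum_{j<i}\mu_j',\ \sum_{j\le i}\mu_j'\,]$ commutes appropriately with Knuth moves; the standard fact is that restricting a word to an interval of consecutive values preserves Knuth class of the restriction, after which Property~\ref{Prop:RSKcharge} applied segment by segment gives $ch_\mu(w) = ch_\mu(RSK(w))$. Granting this, the remaining identifications are the routine bookkeeping indicated above, and the two expansions follow directly from Theorem~\ref{Thm:main}.
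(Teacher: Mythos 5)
Your proposal is correct and follows essentially the same route as the paper: the paper states this corollary without a separate proof, justifying it only by the sentence that one extends $ch_\mu$ to set-valued tableaux and reverse plane partitions via the reading word and applies Theorem~\ref{Thm:main} to Macdonald's expansion $H_\mu[X;1,t]=\sum_\lambda\sum_{T\in SST(\lambda,1^n)}t^{ch_\mu(T)}s_\lambda$, which is exactly your argument. Your write-up in fact supplies the details the paper leaves implicit --- the Knuth-invariance of the refined statistic $ch_\mu$ (via interval restriction plus Property~\ref{Prop:RSKcharge}), the identification $\bS(\mu)=\bigsqcup_\lambda SVT(\lambda,1^n)$ and $\bR(\mu)=\bigsqcup_\lambda RPP(\lambda,1^n)$ from content preservation under RSK, and the sign bookkeeping $(-1)^{\varepsilon(S)}=(-1)^{|\mu|-|\lambda|}$ --- all of which are handled correctly.
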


\section{Other applications and future work} \label{sec:otherapplications}

Here we give the $G$- and $g$-expansion for $k$-atoms and Stanley
symmetric functions where the expansion coefficients count
reverse plane partitions (resp. set-valued tableaux) whose reading word 
is Knuth equivalent to prescribed sets of tableaux.  We leave as
open problems the task of describing the coefficients
without using Knuth equivalence. 

\subsection{k-Atoms}

One study of the tableaux Schur expansion of Macdonald polynomials
led to the discovery of a family of polynomials called atoms.
To be precise, the \cite{[LLM]} study restricted attention to the subspaces 
of Macdonald polynomials $\Lambda^k_t=span \{H_\lambda[X;q,t]\}_{\lambda_1\leq k}$,
for fixed integer $k>0$.  There, for each partition with no part larger than $k$,
a $k$-atom was introduced and defined as
\begin{equation}
s_{\mu}^{(k)}[X;t] = \sum_{T\in \mathcal A^k_\mu}
t^{ch(T)} \, s_{sh(T)}\, \,,
\label{(1.6)}
\end{equation}
for certain prescribed sets of tableaux $\mathcal A^k_\mu$
(see \cite{[LLM]}).
It was conjectured that any Macdonald polynomial 
in $\Lambda^k_t$ can be decomposed as:
\begin{equation}
H_{\lambda}[X;q,t\, ] = \sum_{\mu\in\mathcal P^k}
K_{\mu \lambda}^{(k)}(q,t) \, s_{\mu}^{(k)}[X;t\, ] \quad\text{where}\quad
K_{\mu \lambda}^{(k)}(q,t) \in \mathbb N[q,t] \, .
\label{mackkostka}
\end{equation}
Moreover, it was conjectured that the element $s_\mu^{(k)}[X;t]$
reduces simply to $s_\mu$ for $k\geq |\lambda|$, thus refining
\eqref{macex} since the expansion coefficients
in \eqref{mackkostka} are $K_{\lambda\mu}(q,t)$ for large $k$.


The $k$-atoms are a perfect candidate for Theorem~\ref{Thm:main} since
they have a tableaux Schur expansion by definition.
\begin{Cor} \label{Cor:EG}
For $\bT(\mu,k)=\mathcal A^k_\mu$, we have that
  \begin{align*}
    s_\mu^{(k)}[X;t] &= 
    \sum_{S \in \bS(\mu,k)}^{} (-1)^{\varepsilon(S)} \, t^{ch(S)} g_{sh(S)}\\
    &= \sum_{R \in \bR(\mu,k)}^{} t^{ch(R)} G_{sh(R)} \;.
  \end{align*}
\end{Cor}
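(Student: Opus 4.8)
The plan is to recognize that Corollary~\ref{Cor:EG} is a direct specialization of Theorem~\ref{Thm:main}, so the proof should be short and should closely parallel the proof of Corollary~\ref{Cor:hallinG} for Hall--Littlewood polynomials. The starting point is the defining tableaux Schur expansion \eqref{(1.6)} of the $k$-atom, namely $s_\mu^{(k)}[X;t] = \sum_{T \in \mathcal A^k_\mu} t^{ch(T)} s_{sh(T)}$. This is exactly of the form \eqref{eq:fins} with $\bT(\mu,k) = \mathcal A^k_\mu$ and weight function $wt_{(\mu,k)}(T) = t^{ch(T)}$, so Theorem~\ref{Thm:main} applies verbatim.

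First I would invoke Theorem~\ref{Thm:main} to write the two expansions
\begin{align*}
  s_\mu^{(k)}[X;t] &= \sum_{R \in \bR(\mu,k)}^{} t^{ch(RSK(w(R)))}\, G_{sh(R)}\\
  &= \sum_{S \in \bS(\mu,k)}^{} (-1)^{\varepsilon(S)}\, t^{ch(RSK(w(S)))}\, g_{sh(S)}\,,
\end{align*}
where $\bR(\mu,k)$ and $\bS(\mu,k)$ are the sets of reverse plane partitions and set-valued tableaux, respectively, whose reading words are Knuth equivalent to some element of $\mathcal A^k_\mu$, and the extended weight is $wt_{(\mu,k)}(X) = t^{ch(RSK(w(X)))}$ as prescribed in the definition preceding Theorem~\ref{Thm:main}. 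The second step is to simplify the exponent: by Property~\ref{Prop:RSKcharge}, $ch(RSK(w)) = ch(w)$ for every word $w$, and since the charge of a set-valued tableau or reverse plane partition was defined to be the charge of its reading word, we have $t^{ch(RSK(w(R)))} = t^{ch(R)}$ and likewise for $S$. Substituting these identities yields exactly the stated formulas.

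Unlike the Hall--Littlewood case, I would not attempt any further recharacterization of the index sets $\bR(\mu,k)$ and $\bS(\mu,k)$. For $h_\mu$ one could replace ``Knuth equivalent to an element of $\bT(\mu)$'' by the clean condition ``evaluation $\mu$,'' but the sets $\mathcal A^k_\mu$ defining $k$-atoms carry no such simple closed description, so the coefficients remain expressed through Knuth equivalence to $\mathcal A^k_\mu$. This is consistent with the section's stated intent to leave the elimination of Knuth equivalence as an open problem. The only point requiring a word of care is confirming that the charge statistic on the reading words of set-valued tableaux and reverse plane partitions is genuinely the correct extension of the weight on $\mathcal A^k_\mu$; but this is immediate from the extension convention $wt_\alpha(X) = wt_\alpha(RSK(w(X)))$ together with Property~\ref{Prop:RSKcharge}, so there is essentially no obstacle. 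The proof is therefore a one-line application of Theorem~\ref{Thm:main} followed by the charge-invariance Property~\ref{Prop:RSKcharge}.
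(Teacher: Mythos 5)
Your proof is correct and is exactly the argument the paper intends: the paper states this corollary without proof, remarking only that $k$-atoms are ``a perfect candidate'' for Theorem~\ref{Thm:main} since \eqref{(1.6)} is a tableaux Schur expansion by definition, and your two steps (apply Theorem~\ref{Thm:main} with $wt(T)=t^{ch(T)}$, then simplify $t^{ch(RSK(w(X)))}$ to $t^{ch(X)}$ via Property~\ref{Prop:RSKcharge}) mirror precisely the paper's proof of Corollary~\ref{Cor:hallinG}, minus the recharacterization of the index sets, which you rightly note is unavailable here and is left open in the paper.
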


\subsection{Stanley symmetric functions}
Let $s_1, \dots, s_{n-1}$ be the simple transpositions which generate the
symmetric group $S_n$.  To each reduced decomposition $\sigma = s_{i_1} \dots
s_{i_n}$, we associate the word $(i_1, \dots, i_n)$. For $\sigma \in S_n$, let
$R(\sigma)$ denote the set of words for reduced decompositions of $\sigma$.
Stanley~\cite{S84} defined symmetric functions
\[ F_\sigma = \sum_{w_1 w_2\cdots w_k \in R(\sigma)}^{} 
\sum_{\substack{i_1 \le i_2 \le \cdots \le i_k\\
                w_j > w_{j+1} \implies i_j < i_{j+1}}}^{} 
     x_{i_1} \dots x_{i_k}. \]
Stanley showed that $F_\sigma$ is always a symmetric function, and conjectured
that it is always Schur-positive.  This conjecture was later proven by
Edelman-Greene~\cite{EG87}, and independently by
Lascoux-Sch\"utzenberger~\cite{LS82a}. Edelman-Greene gave a tableaux
Schur expansion for the Stanley symmetric functions. 

\begin{Thm}[\cite{EG87}] \label{Thm:EG}
  \[ F_\sigma = \sum_{T \in \bT(\sigma)}^{} s_{sh(T)} \,\]
over the set $\bT(\sigma)$ of all tableaux whose reading word is a 
reduced word for $\sigma^{-1}$.
\end{Thm}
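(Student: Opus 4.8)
The plan is to prove the identity by a weight-preserving bijection built from \emph{Edelman--Greene} (equivalently, Coxeter--Knuth) insertion, mirroring the role that $RSK$ plays for ordinary semistandard tableaux. First I would unwind the definition of $F_\sigma$ into a sum over \emph{compatible pairs}: writing $w = w_1\cdots w_k$ with $k = \ell(\sigma)$ for a reduced word and $i = (i_1 \le \cdots \le i_k)$ for a weakly increasing sequence that increases strictly at each descent of $w$ (each position $j$ with $w_j > w_{j+1}$), we have
\[ F_\sigma = \sum_{w \in R(\sigma)} \sum_{i \text{ compatible with } w} \bx^{ev(i)}, \]
where $ev(i)$ records how many times each value occurs among the $i_j$. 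The entire problem is then to reorganize this sum by Schur functions.

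The main tool is Edelman--Greene insertion \cite{EG87}: inserting the letters $w_1, \dots, w_k$ one at a time produces an \emph{increasing} tableau $P$ (strictly increasing along rows and up columns) together with a recording tableau. The one feature distinguishing this from $RSK$ is the special bumping rule: when a letter $x$ is inserted into a row that already contains both $x$ and $x+1$, the letter $x+1$ is bumped to the next row while the row itself is left unchanged. I would record the insertion not with a standard tableau but, using the compatible sequence, with a semistandard tableau $Q$ of shape $sh(P)$ obtained by placing $i_j$ in the cell created at the $j$-th insertion. The compatibility condition on $i$ translates exactly into the statement that $Q$ is semistandard, by the usual bumping-path analysis relating ascents and descents of $w$ to the relative positions of successively added cells. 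This yields a bijection between compatible pairs $(w,i)$ with $w \in R(\sigma)$ and pairs $(P,Q)$, where $P$ ranges over the increasing tableaux arising as Edelman--Greene insertion tableaux of reduced words of $\sigma$ (call these the EG-tableaux of $\sigma$) and $Q \in SST(sh(P))$. Summing over $Q$ collapses the inner sum to a Schur function,
\[ F_\sigma = \sum_P \sum_{Q \in SST(sh(P))} \bx^{ev(Q)} = \sum_P s_{sh(P)}, \]
the sum being over EG-tableaux of $\sigma$.

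It remains to identify the EG-tableaux of $\sigma$ with $\bT(\sigma)$. Here I would first observe that a reduced word can never repeat a letter in two adjacent positions, so any semistandard tableau whose reading word is reduced must in fact have strictly increasing rows; hence the semistandard tableaux in $\bT(\sigma)$ are precisely increasing tableaux, matching the output of the insertion. The only remaining discrepancy is the appearance of $\sigma^{-1}$ rather than $\sigma$, which is purely a matter of the reading-word orientation fixed in Definition~\ref{Def:ReadingWord}: the passage from the intrinsic insertion convention to the top-to-bottom row reading turns a reduced word for $\sigma$ into one for $\sigma^{-1}$, using $(s_{a_1}\cdots s_{a_k})^{-1} = s_{a_k}\cdots s_{a_1}$. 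I would then verify that $P$ is an EG-tableau of $\sigma$ exactly when $w(P)$ is a reduced word for $\sigma^{-1}$, which finishes the identification.

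The hard part is establishing the correctness of Edelman--Greene insertion itself: that the special bumping rule always returns an increasing tableau, that the reading word of $P$ stays in the same Coxeter--Knuth class as $w$ (and in particular remains a reduced word for the same permutation), and that the procedure is reversible, so that the map above is a genuine bijection. The cleanest route is to show insertion is invariant under the elementary Coxeter--Knuth relations — the braid move $a(a{+}1)a \sim (a{+}1)a(a{+}1)$ together with the two Knuth-type commutations on strictly ordered triples — and then prove that each Coxeter--Knuth class of reduced words of $\sigma$ contains exactly one reading word of an increasing tableau, with class size equal to the number of standard tableaux of that shape. Verifying invariance under the braid relation, which is precisely where the special bumping rule is essential, is the genuinely delicate case and the crux of the whole argument.
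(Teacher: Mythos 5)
The paper does not prove this statement at all: it is quoted directly from \cite{EG87}, so there is no internal proof to compare against. Your proposal is a correct outline of precisely the argument in that cited source --- rewriting $F_\sigma$ over compatible pairs, using Edelman--Greene (Coxeter--Knuth) insertion with a semistandard recording tableau to biject onto pairs $(P,Q)$, collapsing the sum over $Q$ to Schur functions, and identifying the insertion tableaux with the increasing tableaux whose reading words are reduced words for $\sigma^{-1}$ --- with the genuinely hard step (invariance of insertion under the Coxeter--Knuth relations, especially the braid move) correctly identified as the crux.
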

Since we have a tableaux Schur expansion, we can apply Theorem~\ref{Thm:main}.
\begin{Cor} \label{Cor:EG}
\[
    F_\sigma = \sum_{S \in \bS(\sigma)}^{} (-1)^{\varepsilon(S)}g_{sh(S)}\,,
\]
where $\bS(\sigma)$ is the set of all set-valued tableaux 
with a reading word which is Knuth equivalent to a reduced word for $\sigma^{-1}$,
and
  \[
F_\sigma = \sum_{R \in \bR(\sigma)}^{} G_{sh(R)} \;,
\]
where $\bR(\sigma)$ is the set of all reverse plane partitions
with a reading word which is Knuth equivalent to a reduced word for $\sigma^{-1}$.
\end{Cor}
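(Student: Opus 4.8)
The plan is to observe that Corollary~\ref{Cor:EG} is an immediate consequence of Theorem~\ref{Thm:EG} together with the main result, Theorem~\ref{Thm:main}. The Edelman--Greene theorem provides exactly the kind of tableaux Schur expansion $F_\sigma = \sum_{T \in \bT(\sigma)} s_{sh(T)}$ to which Theorem~\ref{Thm:main} applies, with the weight function $wt_\sigma$ taken to be identically $1$. So the strategy is simply to invoke Theorem~\ref{Thm:main} with $\alpha = \sigma$ and $\bT(\alpha) = \bT(\sigma)$ the set of tableaux whose reading word is a reduced word for $\sigma^{-1}$.

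First I would recall the definitions of $\bS(\alpha)$ and $\bR(\alpha)$ from Section~\ref{sec:main}: a set-valued tableau $S$ lies in $\bS(\sigma)$ precisely when $RSK(w(S)) \in \bT(\sigma)$, and a reverse plane partition $R$ lies in $\bR(\sigma)$ precisely when $RSK(w(R)) \in \bT(\sigma)$. Since $RSK(w(T)) = T$ for a semistandard tableau and the tableaux in $\bT(\sigma)$ are characterized by having a reading word that is a reduced word for $\sigma^{-1}$, the condition $RSK(w(X)) \in \bT(\sigma)$ is equivalent to saying that $w(X)$ is Knuth equivalent to a reduced word for $\sigma^{-1}$. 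This matches exactly the description of $\bS(\sigma)$ and $\bR(\sigma)$ in the statement of the corollary.

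With this identification in hand, equations~\eqref{eq:finG} and \eqref{eq:fing} of Theorem~\ref{Thm:main}, specialized to $wt_\sigma \equiv 1$, directly yield the two claimed expansions
\[ F_\sigma = \sum_{S \in \bS(\sigma)} (-1)^{\varepsilon(S)} g_{sh(S)} = \sum_{R \in \bR(\sigma)} G_{sh(R)}. \]
The only genuine verification is that the weight function is well-defined and constant: because every tableau in $\bT(\sigma)$ receives weight $1$, the extension $wt_\sigma(X) = wt_\sigma(RSK(w(X)))$ is also identically $1$ on $\bS(\sigma)$ and $\bR(\sigma)$, so the weight factors drop out of the sums.

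I expect there to be essentially no obstacle here, as the corollary is purely an application of the machinery already established. The one point deserving a sentence of care is confirming that ``$w(X)$ is Knuth equivalent to a reduced word for $\sigma^{-1}$'' is genuinely the right reading of the membership conditions for $\bS(\sigma)$ and $\bR(\sigma)$; this follows since Knuth equivalence is exactly the fiber of the $RSK$ map, so $RSK(w(X)) \in \bT(\sigma)$ holds if and only if $w(X)$ is Knuth equivalent to the reading word of some tableau in $\bT(\sigma)$, i.e.\ to a reduced word for $\sigma^{-1}$.
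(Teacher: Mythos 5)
Your overall route is exactly the paper's: Corollary~\ref{Cor:EG} is the specialization of Theorem~\ref{Thm:main} to the Edelman--Greene expansion of Theorem~\ref{Thm:EG} with $wt_\sigma\equiv 1$, and the paper itself offers no more proof than that observation. The genuine problem lies in the step you single out as needing ``a sentence of care'': the claimed equivalence that $RSK(w(X))\in\bT(\sigma)$ holds if and only if $w(X)$ is Knuth equivalent to a reduced word for $\sigma^{-1}$. Only the forward direction is valid. Knuth equivalence does not preserve the property of being reduced: each Knuth class contains exactly one reading word of a semistandard tableau, namely $w(RSK(\cdot))$, and that word may fail to be reduced even when the class contains reduced words. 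The paper's own example immediately after the corollary refutes your ``if and only if'': the set-valued tableau $S'$ there has reading word $34312$, which is a reduced word for $(s_2s_1s_3s_4s_3)^{-1}$ (hence certainly Knuth equivalent to a reduced word), yet $RSK(34312)$ has reading word $43312$, which is not reduced, so $RSK(w(S'))\notin\bT(\sigma)$ for any $\sigma$.

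Nor is this a pedantic point, because under the literal reading of $\bS(\sigma)$ that you adopt (which, to be fair, is what the corollary's wording suggests) the displayed identities become false: grouping the extra set-valued tableaux by Knuth class and applying Theorem~\ref{Thm:main} to the one-element tableau set $\{T_C\}$, where $T_C$ is the insertion tableau of such a class $C$, each extra class contributes $s_{sh(T_C)}$, so the right-hand side would exceed $F_\sigma$ by a nonzero Schur-positive quantity whenever such a class exists (as it does for $\sigma=s_2s_1s_3s_4s_3$). The sets in the corollary must be understood as in Section~\ref{sec:main}, namely $\bS(\sigma)=\{S: RSK(w(S))\in\bT(\sigma)\}$ and $\bR(\sigma)=\{R: RSK(w(R))\in\bT(\sigma)\}$; equivalently, the reading word must be Knuth equivalent to the reading word of a tableau in $\bT(\sigma)$, not merely to some reduced word for $\sigma^{-1}$. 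With that definition, the rest of your argument (invoking \eqref{eq:finG} and \eqref{eq:fing} with constant weight) is already a complete and correct proof; the repair is simply to delete the false equivalence and keep the RSK-based description of the sets --- precisely the distinction the paper stresses in the remark following the corollary.
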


It is worth noting that we {\it cannot} describe $\bS(\sigma)$ as
the set of all set-valued
tableaux whose reading word is a reduced word for $\sigma^{-1}$.
For example, consider
  \newcommand{\Ca}{1,2} 
  \newcommand{\Cb}{2,3}  
\[ S = \Yboxdim{18pt}\young(3,\Ca\Cb)\normalsize \;. \] 
The reading word $w(S) = 33212$ is clearly not the reduced word of any
permutation, but $S$ is in $\bS(s_2 s_1 s_3 s_2 s_3)$, since
$w(RSK(w(S))) = 32312$. On the other hand
  \newcommand{\Cd}{2,3,4}  
\[ S' = \Yboxdim{24pt}\young(3,1\Cd)\normalsize \] 
has reading word $34312$ which is a reduced word for the inverse of
the permutation $(s_2 s_1 s_3 s_4 s_3$).  However, the reading word of
$RSK(w(S'))$ is $43312$, so $S'$ is not in $\bS(\sigma)$ for any
$\sigma$.

A similar phenomenon occurs with reverse plane partitions. For example,
with 
\[ T = \young(22,12,13)\,, \]
$w(T)= 2213$ which is not a reduced word.  However, applying
RSK gives a tableau whose reading word is the reduced word $2123$.
On the other hand, given instead
\[ T' = \young(34,13,12)\,, \]
we have that $w(T')=34312$ is reduced, but the insertion gives
a tableau whose reading word is $43312$.

Although Corollary~\ref{Cor:EG} gives an interpretation for the $g$- and
$G$-expansions of a Stanley symmetric function, a characterization 
of the set-valued tableaux and reverse plane partitions appearing 
in these expansions that does not involve RSK-insertion remains an interesting
open problem.

\subsection{Related work}

In \cite{Lpm1}, Lascoux studies transformations on symmetric functions 
under the shift of power sums $p_i \to p_i \pm 1$.  The application
of such a transformation to the Hall-Littlewood polynomials leads to
an expansion for transformed Hall-Littlewood polynomials in terms of 
plane partitions.  Finding the precise connection between this
expansion and the expansion of a Hall-Littlewood polynomial
$H_\mu[X;t]$ given in Corollary~\ref{Cor:hallinG} is an interesting
problem.

The Schubert representatives for the K-theory of affine Grassmannians 
and their dual in the nil Hecke ring are given by a refinement of the 
$G$ and the $g$-functions to families, $G^k$ and $g^k$ \cite{LSS,MoK}. 
These too are inhomogenous functions, but they are indexed by an extra integer 
parameter $k>0$ and now, the dual $k$-Schur 
functions \cite{[LMhecke]} and $k$-Schur functions \cite{[LMproofs]}  
are the lowest and highest degree terms, respectively.  
There is a tableaux definition for $G^k$ in the spirit of 
Definition~\ref{Def:Schur} for Schur functions \cite{MoK}
giving rise to many problems regarding the tableaux combinatorics
of the $G^k/g^k$ families.  One direction along these lines would be 
to find the $G^k/g^k$-expansions of functions with a tableaux-$k$-Schur 
or tableaux-dual-$k$-Schur expansion.

\section{Identities} \label{sec:identities}

\begin{Prop}
Given any function $f_\alpha$ where
\begin{equation}
f_\alpha= \sum_{T \in \bT(\alpha)}^{} wt(T) s_{sh(T)}\,,
\end{equation}
we have that
\begin{equation}
  \sum_{S \in \bS(\alpha)}^{} (-1)^{\varepsilon(S)}\,wt(S)=wt(S_0)\,,
\end{equation}
where $S_0$ is the tableau of row shape.
\end{Prop}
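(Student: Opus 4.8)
The plan is to read the desired identity directly off the $g$-expansion of $f_\alpha$ furnished by Theorem~\ref{Thm:main}, by applying a single one-variable specialization. Grouping the right-hand side of \eqref{eq:fing} by shape, I would write $f_\alpha=\sum_\lambda d_\lambda\, g_\lambda$ with $d_\lambda=\sum_{S\in\bS(\alpha),\,sh(S)=\lambda}(-1)^{\varepsilon(S)}wt(S)$; since each $g_\lambda=s_\lambda+(\text{lower order})$ this is a finite sum in $\Lambda$, and the quantity to be computed is precisely $\sum_\lambda d_\lambda=\sum_{S\in\bS(\alpha)}(-1)^{\varepsilon(S)}wt(S)$. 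I would therefore evaluate both expansions of $f_\alpha$ at the specialization $x_1=1$, $x_i=0$ for $i\ge 2$, which is a well-defined ring homomorphism on the genuine symmetric functions $f_\alpha$ and $g_\lambda$ (note that one never has to specialize the inhomogeneous $G_\lambda$, which live only in the completion).

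The two evaluations are immediate from the combinatorial definitions. On the $g$-side, Theorem~\ref{Thm:LP} writes $g_\lambda=\sum_{R\in RPP(\lambda)}\bx^{ev(R)}$; under the specialization only reverse plane partitions all of whose entries equal $1$ survive, and there is exactly one such filling of each shape, with evaluation $(\lambda_1)$, contributing $x_1^{\lambda_1}=1$. Hence $g_\lambda\mapsto 1$ for every $\lambda$, so the $g$-expansion collapses to $\sum_\lambda d_\lambda$. On the Schur side, $s_\nu$ in the single variable $x_1$ equals $x_1^{|\nu|}$ when $\nu$ is a single row and $0$ otherwise (a semistandard filling with all entries $1$ forces one row), so $s_\nu\mapsto 1$ exactly when $\nu$ has one row. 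Applying this to $f_\alpha=\sum_{T\in\bT(\alpha)}wt(T)\,s_{sh(T)}$ leaves only the row-shaped tableaux, that is, the contribution $wt(S_0)$.

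Equating the two specialized values gives $\sum_{S\in\bS(\alpha)}(-1)^{\varepsilon(S)}wt(S)=wt(S_0)$, as claimed. The only genuinely delicate points, and the ones I would be careful to justify, are that $g_\lambda(1,0,0,\dots)=1$ for \emph{every} $\lambda$ (this is where Theorem~\ref{Thm:LP}, rather than the naive leading-term estimate, is essential, since $g_\lambda$ is inhomogeneous) and that the specialization isolates a single row-shape term $S_0$; the latter is simply the statement that the coefficient of the single-row Schur function in $f_\alpha$ is $wt(S_0)$. No sign-reversing involution is needed here: the identity is a one-line consequence of Theorem~\ref{Thm:main} once the specialization $x_1=1,\ x_{\ge 2}=0$ is applied, which is presumably the ``simple proof using the methods described here'' advertised in the introduction.
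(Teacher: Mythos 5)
Your proof is correct, but it takes a genuinely different route from the paper's. The paper does not invoke Theorem~\ref{Thm:main} here at all: it constructs a second, self-contained sign-reversing and weight-preserving involution directly on $\bS(\alpha)$, in the spirit of Section~\ref{sec:involution}. Taking $x$ to be the largest letter of $S$ and $row(S)$ the highest row containing $x$, that involution either moves an $x$ out of a multicell up to the end of the row above, or moves the trailing $x$ of row $row(S)$ down into the rightmost admissible cell of the row below, recursing on $x-1$ when $row(S)=1$ and neither move is available; the unique fixed point is the row tableau $S_0$. You instead take Theorem~\ref{Thm:main} (already proved at that point) together with Theorem~\ref{Thm:LP} as input and apply the specialization $x_1=1$, $x_i=0$ for $i\ge 2$: every $g_\lambda$ specializes to $1$ because the all-ones filling is the unique surviving reverse plane partition, while $s_\nu$ specializes to $1$ exactly for one-row shapes, so equating the two specialized expansions of $f_\alpha$ gives the identity. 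The points you flag are indeed the right ones: only the polynomial $g$-side (never the completed $G$-side) may be specialized, $g_\lambda(1,0,0,\dots)=1$ genuinely needs the Lam--Pylyavskyy formula rather than a leading-term estimate, and the sum over $\bS(\alpha)$ is finite since RSK preserves content. Both proofs carry the same implicit hypothesis that $\bT(\alpha)$ contains exactly one row-shaped tableau; in general both arguments compute $\sum wt(T)$ over row-shaped $T\in\bT(\alpha)$. What your route buys is brevity and robustness: it sidesteps the delicate case analysis of an ad hoc involution --- indeed, the paper's three cases, as literally stated, do not cover a tableau such as the one with bottom row $\{1\},\{1,2\}$ and second row $\{2\}$ (there $row(S)=2$ but the multicell containing $x=2$ lies in row $1$, so no case applies), even though this tableau must cancel against the semistandard tableau with rows $11$ and $22$. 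What the paper's route buys is independence from Theorem~\ref{Thm:main} and an explicit combinatorial cancellation scheme rather than an algebraic identity of specializations.
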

\begin{proof}
We do this by constructing a sign-reversing, weight-preserving involution 
on $\bS(\alpha)$ where $S_0$ is the only fixed point.  The appropriate
candidate is defined as follows; let $n$ denote the largest letter in $S$
and set $x=n$.  Let $row(S)$ be the highest row of $S$ containing $x$
and determine which of the following cases occurs:
\begin{enumerate}
\item there is an $x$ in a multi-element cell of row $row(S)$
\item $row(S)\neq 1$ and no multi-element cells in row $row(S)$ or
$row(S)-1$ contain $x$
\item $row(S)=1$ and there is no $x$ in a multi-element cell of row 1.
\end{enumerate}
When $S$ satisfies condition (1), construct $S'$ 
from $S$ by moving the $x$ in this multi-element
cell to the end of $row(S)+1$.  When we have condition (2), 
construct $S'$ from $S$ by moving the $x$ from the end
of row $row(S)$ into the rightmost cell of $row(S)-1$ containing
only letters strictly smaller than $x$.  In the last case (3), we 
replace $x$ by $x-1$ and determine which case is satisfied by $S$,
starting from the point that we let $row(S)$ denote the highest row 
containing $x$.  The fixed points are semi-standard tableaux of row shape
(those with all letters in row 1 and no multi-element cells).
\end{proof}

\begin{Cor}
The sum of the expansion coefficients $d_{\mu\lambda}(t)$ in
\begin{equation}
  H_\mu[X;t] = \sum_\lambda d_{\mu\lambda}(t) g_\lambda
\end{equation}
is $t^{n(\mu)}$.
\end{Cor}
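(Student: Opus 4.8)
The plan is to combine the $g$-expansion of $H_\mu[X;t]$ from Corollary~\ref{Cor:hallinG} with the sign-reversing involution of the preceding Proposition, and then to evaluate the weight of the unique (row-shape) fixed point. First I would recall from Corollary~\ref{Cor:hallinG} that
\[
  H_\mu[X;t] = \sum_\lambda (-1)^{|\mu|-|\lambda|}\sum_{S \in SVT(\lambda,\mu)} t^{ch(S)}\, g_\lambda,
\]
so that $d_{\mu\lambda}(t) = (-1)^{|\mu|-|\lambda|}\sum_{S\in SVT(\lambda,\mu)} t^{ch(S)}$. Since $\varepsilon(S) = |ev(S)| - |shape(S)| = |\mu|-|\lambda|$ for any set-valued tableau $S$ of shape $\lambda$ and evaluation $\mu$, summing over all $\lambda$ collapses the double sum into a single sum over $\bS(\mu)$, the set of set-valued tableaux of evaluation $\mu$:
\[
  \sum_\lambda d_{\mu\lambda}(t) = \sum_{S\in\bS(\mu)} (-1)^{\varepsilon(S)}\, t^{ch(S)}.
\]

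Next I would apply the preceding Proposition to the Hall-Littlewood expansion \eqref{eq:Htos}, taking $\bT(\alpha)=\bT(\mu)$ and $wt(T)=t^{ch(T)}$. Because the charge of a set-valued tableau is defined to be the charge of its reading word and $ch(RSK(w))=ch(w)$ by Property~\ref{Prop:RSKcharge}, the extended weight is precisely $wt(S)=t^{ch(S)}$. The Proposition then yields
\[
  \sum_{S\in\bS(\mu)} (-1)^{\varepsilon(S)}\, t^{ch(S)} = wt(S_0) = t^{ch(S_0)},
\]
where $S_0$ is the unique single-row tableau of evaluation $\mu$.

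It remains to show $ch(S_0)=n(\mu)$, which I expect to be the only genuine computation. The reading word of $S_0$ is $1^{\mu_1}2^{\mu_2}\cdots$. Extracting charge subwords from the right, each pass marks the rightmost surviving copy of every letter, so the $k$-th pass produces the increasing permutation $1\,2\cdots\mu_k'$ (where $\mu_k'$ is the number of parts of $\mu$ that are at least $k$) and leaves behind the row word of evaluation $(\mu_1-1,\mu_2-1,\ldots)$. The charge of $1\,2\cdots\mu_k'$ is $\binom{\mu_k'}{2}$, so summing over passes gives $ch(S_0)=\sum_k \binom{\mu_k'}{2}$, which is the standard expression for $n(\mu)=\sum_i (i-1)\mu_i$ (both count pairs of cells lying in a common column). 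The main, and relatively minor, obstacle is verifying that the charge-subword decomposition of the row word retains this clean columnar form at every stage; once that is established, the identity $\sum_\lambda d_{\mu\lambda}(t)=t^{n(\mu)}$ follows at once.
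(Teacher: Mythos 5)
Your proposal is correct and takes essentially the same route as the paper: the paper's one-line proof likewise rests on applying the preceding Proposition to the Hall-Littlewood expansion \eqref{eq:Htos} (with $wt(T)=t^{ch(T)}$ and $\bS(\mu)$ identified, via Corollary~\ref{Cor:hallinG} and Property~\ref{Prop:RSKcharge}, as the set-valued tableaux of evaluation $\mu$), reducing everything to the fact that the unique row-shape tableau of evaluation $\mu$ has charge $n(\mu)$. Your explicit charge-subword computation $ch(S_0)=\sum_k \binom{\mu_k'}{2}=n(\mu)$ simply fills in the detail the paper leaves implicit, and it is sound.
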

\begin{proof}
There is only one semi-standard row shape tableau $T$ of weight $\mu$
and $\charge(T)=n(\mu)$.
\end{proof}

\bibliographystyle{alpha}
\bibliography{fields10}

\end{document}